\documentclass[12pt]{amsart}

\topmargin=1cm
\textwidth=16cm
\textheight=22cm
\oddsidemargin=0cm
\evensidemargin=0cm

\usepackage{mathpple}      % Math Euler
\usepackage{charter}        % Text Charter

\newtheorem{theorem}{Theorem}[section]

\newtheorem{proposition}[theorem]{Proposition}
\newtheorem{corollary}[theorem]{Corollary}

\theoremstyle{definition}
\newtheorem{definition}[theorem]{Definition}
\newtheorem{example}[theorem]{Example}
\newtheorem{conjecture}[theorem]{Conjecture}

\theoremstyle{remark}
\newtheorem{remark}[theorem]{Remark}

\numberwithin{equation}{section}

%    Absolute value notation

%    Blank box placeholder for figures (to avoid requiring any
%    particular graphics capabilities for printing this document).

\def\RR{\mathbb{R}}

\def\seq{\subseteq}

\begin{document}

\title{On the strong non-rigidity of certain tight
Euclidean designs}
\author{Eiichi Bannai, Etsuko Bannai,}
\address{Eiichi Bannai, Etsuko Bannai,
and Djoko Suprijanto\\
Graduate School of Mathematics\\
Kyushu University\\
Hakozaki 6-10-1, Higashi-ku, Fukuoka 812-8581\\
JAPAN.}
\author{Djoko Suprijanto}
\address{Djoko Suprijanto\\
On leave from Department of Mathematics\\
Bandung Institute of Technology\\
Jalan Ganesha 10, Bandung 40133\\
INDONESIA.}
\thanks{The third author is supported in part by
the Monbukagakusho scholarship.}

\date{}
\maketitle

\begin{abstract}
\noindent
We study the non-rigidity of Euclidean $t$-designs,
namely we study when Euclidean designs
(in particular certain tight Euclidean designs) can be
deformed keeping the property of being Euclidean
$t$-designs. We show that certain tight Euclidean
$t$-designs are non-rigid, and in fact satisfy a stronger
form of non-rigidity which we call strong non-rigidity.
This shows that there are plenty of non-isomorphic
tight Euclidean $t$-designs for certain parameters,
which seems to have been unnoticed before. We also
include the complete classification of tight Euclidean
$2$-designs.
\end{abstract}

\bigskip
\noindent
\section{Introduction}
\par
The concept of spherical design was introduced by
Delsarte, Goethals and Seidel \cite{D-G-S}  in 1977
for finite sets in the unit sphere $S^{n-1}$
(in the Euclidean space $\RR^n$) . It measures how
much the finite set approximates the sphere $S^{n-1}$
with respect to the  integral of polynomial functions.
The exact definition is given as follows.

\noindent
\begin{definition}\label{Def1.1}
Let $t$ be a positive integer.  A finite nonempty subset
$X \seq S^{n-1}$ is called a \emph{spherical $t$-design}
if the following condition holds:
\begin{equation} \label{Eq1.1}
\frac{1}{|S^{n-1}|}\int_{S^{n-1}}f(\boldsymbol x)
d\sigma(\boldsymbol x)=\frac{1}{|X|}
\sum_{\boldsymbol x \in X}
f(\boldsymbol x),
\end{equation}
for any polynomial $f(\boldsymbol x)
\in \RR[x_1,x_2,\ldots,x_n]$ of degree at most $t$,
where
$\sigma(\boldsymbol x)$ is the $O(n)$-invariant
measure on $S^{n-1}$ and $|S^{n-1}|$ is the area
of the sphere $S^{n-1}$.
\end{definition}

The concept of spherical $t$-design was generalized
by Neumaier and Seidel \cite{N-S} in the following
two ways: (i) to drop the condition that it is on a sphere,
(ii) to allow weight. The new concept is called
\emph{Euclidean $t$-design}.  This concept is closely
related to the cubature formulae in numerical analysis
and approximation theory (see, e.g., \cite{D-X}),  and a similar concept such
as rotatable design has already existed also in
mathematical statistics (see, e.g., \cite{B-H,K}.)

Recently, the first and second authors
\cite{B-B-2}, slightly modified the Neumaier and
Seidel's definition of Euclidean $t$-design by dropping
the assumption of excluding the origin.
We will review the definition below.

Let $X$ be a finite set in $\RR^n,~n \geq 2.$
Let $\{r_1,~r_2,~\ldots,~r_p \}=\{\| \boldsymbol x \|,
~\boldsymbol x \in X \},$ where $\| \boldsymbol x \|$
is a norm of $\boldsymbol x$ defined by standard
inner product in $\RR^n$ and $r_i$ is possibly $0$.
For each $i$, we define $S_i=\{\boldsymbol x \in
\RR^n,~\| \boldsymbol x \|=r_i \}$, the sphere of radius
$r_i$ centered at 0.  We say that $X$ is supported by
the $p$ concentric spheres $S_1,~S_2,~\ldots,~S_p$.
If $r_i=0$, then $S_i=\{0\}$. Let $X_i=X \cap S_i,
~\mbox{for }1 \leq i \leq p.$  Let
$\sigma(\boldsymbol x)$
be the $O(n)$-invariant measure on the unit sphere
$S^{n-1} \seq \RR^n.$  We consider the measure
$\sigma_i(\boldsymbol x)$ on each $S_i$ so that
$|S_i|=r_i^{n-1}|S^{n-1}|$, with $|S_i|$ is the
surface area of $S_i$.  We associate a positive real
valued function $w$ on $X$, which is called a
\emph{weight} of $X$.  We define $\displaystyle w(X_i)
=\sum_{\boldsymbol x \in X_i}w(\boldsymbol x)$.
Here if $r_i=0,$ then we define $\displaystyle
\frac{1}{|S_i|}\int_{S_i}f(\boldsymbol x)
d \sigma_i(\boldsymbol x)=f(0),$ for any function
$f(\boldsymbol x)$ defined on $\RR^n$.  Let $\displaystyle S=\bigcup_{i=1}^p S_i.$
Let $\varepsilon_S \in \{0,~1\}$ be  defined by
\[
\varepsilon_S=\left\{
                    \begin{array}{cc}
                     1, & 0 \in S\\
                     0, & 0 \notin S
                    \end{array}.
              \right.
\]
We give some more notation we use. Let $\mbox{Pol}(\RR^n)=\RR[x_1,x_2,\ldots, x_n]$
be the vector space of polynomials in $n$ variables
$x_1,~x_2, \ldots, ~x_n.$  Let $\mbox{Hom}_l(\RR^n)$
be the subspace of $\mbox{Pol}(\RR^n)$ spanned by
homogeneous polynomials of degree $l$. Let
$\mbox{Harm}(\RR^n)$ be the subspace of
$\mbox{Pol}(\RR^n)$ consisting of all harmonic
polynomials.  Let $\mbox{Harm}_l(\RR^n)
= \mbox{Harm}(\RR^n) \cap \mbox{Hom}_l(\RR^n).$
Then we have $\displaystyle \mbox{Pol}_l(\RR^n)
= \oplus_{i=0}^l \mbox{Hom}_i(\RR^n)$.
Let $\displaystyle \mbox{Pol}_l^*(\RR^n)
= \oplus_{i\equiv l(2), \atop  0 \leq i \leq l}
\mbox{Hom}_i(\RR^n).$  Let $\mbox{Pol}(S)$,
$\mbox{Pol}_l(S)$, $\mbox{Hom}_l(S)$,
$\mbox{Harm}(S)$, $\mbox{Harm}_l(S)$ and
$\mbox{Pol}_l^*(S)$ be the sets of corresponding
polynomials restricted to the union $S$ of $p$
concentric spheres.  For example
$\mbox{Pol}(S)=\{f|_S,~f \in \mbox{Pol}(\RR^n)\}$.

With the notation mentioned above, we define a
Euclidean $t$-design as follows.
\noindent
\begin{definition}\label{Def1.2}
Let $X$ be a finite set with a weight function $w$
and let $t$ be a positive integer.  Then $(X,w)$ is
called a \emph{Euclidean $t$-design} in $\RR^n$ if
the following condition holds:
\[
\sum_{i=1}^p \frac{w(X_i)}{|S_i|}\int_{S_i}
f(\boldsymbol x)d\sigma_i(\boldsymbol x)
=\sum_{\boldsymbol x \in X}w(\boldsymbol x)
f(\boldsymbol x),
\]
for any polynomial $f(\boldsymbol x) \in
\mbox{Pol}(\RR^n)$ of degree at most $t$.
\end{definition}

Let $X$ be a Euclidean $2e$-design in $\RR^n.$
Then it is known that $|X| \geq \dim(\mbox{Pol}_e(S)).$
Let $X$ be an antipodal $(2e+1)$-design in $\RR^n.$
Then it is also known that $|X^*| \geq
\dim(\mbox{Pol}_e^*(S)).$  Here $X^*$ is an antipodal
half part of $X$ satisfying $X^* \cup (-X^*)=X$ and $X^*
\cap (-X^*)=\{0\}$ or $\emptyset$.  Although better lower
bounds are proved in \cite{D-S} and \cite{N-S},
$\dim(\mbox{Pol}_e(S))$ and $\dim(\mbox{Pol}_e^*(S))$
are considered to be very natural.  We define the
following tightness for the Euclidean designs
(c.f. \cite{B-B-2, Et}).

\begin{definition}\label{Def1.4}
Let $X$ be a Euclidean $2e$-design supported by $S$.
If $|X|=\dim(\mbox{Pol}_e(S))$ holds we call \emph{$X$ a
tight $2e$-design on $S$}.  Moreover if $\dim(\mbox{Pol}_e(S))=\dim(\mbox{Pol}_e(\RR^n))$ holds,
then $X$ is called \emph{a tight Euclidean $2e$-design.}
\end{definition}

\begin{definition}\label{Def1.5}
Let $X$ be an antipodal Euclidean $(2e+1)$-design
supported by $S$.  Assume $w(\boldsymbol x)
=w(-\boldsymbol x)$ for any $\boldsymbol x \in X$.
If $|X^*|=\dim(\mbox{Pol}_e^*(S))$ holds, we call
\emph{$X$ an antipodal tight $(2e+1)$-design on $S$}.
Moreover if $\dim(\mbox{Pol}_e^*(S))
=\dim(\mbox{Pol}_e^*(\RR^n))$ holds,
then $X$ is called  \emph{an antipodal tight
Euclidean $(2e+1)$-design.}
\end{definition}

In Section 2, we give some more basic facts about the
Euclidean designs. In Section 3, we give the definition
of the strong non-rigidity of Euclidean designs.  Our
main theorem is Theorem \ref{Theo3.8}, in which we
show that the following known examples of tight
Euclidean designs are strongly non-rigid:  tight
Euclidean $4$-designs in $\RR^2,$  tight Euclidean
$2$-designs in $\RR^n$ supported by one sphere, or
equivalently, tight spherical $2$-designs.  We also show
that antipodal tight spherical $3$-designs in $\RR^2$ in
the sense of Euclidean design as well as antipodal tight
Euclidean $5$-designs in $\RR^2$ are strongly non-rigid.
The implication of these facts are the existence of
infinitely many non-isomorphic tight Euclidean designs
with the given strength. This is quite contrary to the case
of spherical designs, where tight spherical $t$-designs
are rigid, and
there are only finitely many tight spherical $t$-designs
in $S^{n-1}$ (up to orthogonal transformations) for each
fixed pair of $n$ and $t$.

The complete classification of tight Euclidean
$2$-designs in $\RR^n$ is given in Section 4.
We also show that any finite subset $X \seq \RR^n$
of cardinality $n+1$ is a Euclidean $2$-design if and
only if $X$ is a $1$-inner product set with negative
inner product value.  Here we say $X \in \RR^n$ is
an $e$-inner product set if
$|\{\langle \boldsymbol x,\boldsymbol y \rangle,
~\boldsymbol x,\boldsymbol y \in X,\boldsymbol x
\neq \boldsymbol y \}|=e$ holds.  We remark that
$\displaystyle |X| \leq \dim(\mbox{Pol}_e(\RR^n))
=\left(n+e \atop e \right)$ holds
for any $e$-inner product set $X$ in $\RR^n$
(\cite{D-F}).

\section{Basic facts on Euclidean designs}
The following theorem gives a condition which is
equivalent to the definition of Euclidean $t$-designs.

\noindent
\begin{theorem}[Neumaier-Seidel]\label{TheoN-S}
Let $X$ be a finite nonempty subset in $\RR^n$
with weight function $w$.  Then the following (1) and (2)
are equivalent:
\begin{itemize}
\item[(1)] $X$ is a Euclidean $t$-design.
\item[(2)] $\displaystyle \sum_{\boldsymbol u \in X}
w(\boldsymbol u)
\|\boldsymbol u\|^{2j}
\varphi(\boldsymbol u)=0,$ for any polynomial $\varphi \in \emph{Harm}_l(\mathbb{R}^n)$\\
with $1 \leq l \leq t$ and $0 \leq j \leq \lfloor \frac{t-l}{2} \rfloor$.
\end{itemize}
\end{theorem}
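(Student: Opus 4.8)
The plan is to exploit linearity. Both sides of the identity in Definition \ref{Def1.2} are linear functionals of $f$ on the finite-dimensional space $\mbox{Pol}_t(\RR^n)$, so it suffices to test the identity on a convenient spanning set. I would use the classical harmonic decomposition of homogeneous polynomials,
\[
\mbox{Hom}_k(\RR^n)=\bigoplus_{0\le 2j\le k}\|\boldsymbol x\|^{2j}\,\mbox{Harm}_{k-2j}(\RR^n)\qquad(n\ge 2),
\]
which upon summing over $k\le t$ gives
\[
\mbox{Pol}_t(\RR^n)=\bigoplus_{\substack{l\ge 0,\ j\ge 0\\ l+2j\le t}}\|\boldsymbol x\|^{2j}\,\mbox{Harm}_l(\RR^n).
\]
Hence $X$ is a Euclidean $t$-design if and only if the design identity holds for every $f$ of the form $f(\boldsymbol x)=\|\boldsymbol x\|^{2j}\varphi(\boldsymbol x)$ with $\varphi\in\mbox{Harm}_l(\RR^n)$ and $l+2j\le t$, and this equivalence runs in both directions.

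Next I would evaluate both sides for such an $f$. On the sphere $S_i$ one has $\|\boldsymbol x\|^{2j}=r_i^{2j}$, so by homogeneity of $\varphi$ together with the normalization $|S_i|=r_i^{n-1}|S^{n-1}|$,
\[
\frac{1}{|S_i|}\int_{S_i}\|\boldsymbol x\|^{2j}\varphi(\boldsymbol x)\,d\sigma_i(\boldsymbol x)
=r_i^{2j+l}\cdot\frac{1}{|S^{n-1}|}\int_{S^{n-1}}\varphi(\boldsymbol x)\,d\sigma(\boldsymbol x),
\]
and this equality persists at a possible sphere $S_i=\{0\}$, since both sides vanish when $l+2j\ge 1$ and agree trivially when $l=j=0$. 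Therefore the left-hand side of the design identity equals $\bigl(\sum_{i=1}^p w(X_i)\,r_i^{2j+l}\bigr)\cdot\frac{1}{|S^{n-1}|}\int_{S^{n-1}}\varphi\,d\sigma$. Now I invoke the mean-value property of harmonic functions: for $\varphi\in\mbox{Harm}_l(\RR^n)$ with $l\ge 1$ the spherical mean $\int_{S^{n-1}}\varphi\,d\sigma$ equals $\varphi(0)=0$, so the left-hand side vanishes; while for $l=0$, with $\varphi$ the constant $c$, the left-hand side is $c\sum_i w(X_i)r_i^{2j}$. The right-hand side is $\sum_{\boldsymbol x\in X}w(\boldsymbol x)\|\boldsymbol x\|^{2j}\varphi(\boldsymbol x)$, which in the case $l=0$ equals $c\sum_i w(X_i)r_i^{2j}$ as well, because regrouping $X$ by the spheres $X_i$ gives $\sum_{\boldsymbol x\in X_i}w(\boldsymbol x)r_i^{2j}=w(X_i)r_i^{2j}$.

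Combining the two cases finishes the proof: for $l=0$ the design identity holds automatically and so imposes no constraint, whereas for $l\ge 1$ the identity on $f=\|\boldsymbol x\|^{2j}\varphi$ is equivalent to $\sum_{\boldsymbol x\in X}w(\boldsymbol x)\|\boldsymbol x\|^{2j}\varphi(\boldsymbol x)=0$. Letting $\varphi$ range over $\mbox{Harm}_l(\RR^n)$ and $(l,j)$ over the pairs with $1\le l$, $j\ge 0$, $l+2j\le t$ — equivalently $1\le l\le t$ and $0\le j\le\lfloor\frac{t-l}{2}\rfloor$ — yields exactly condition (2). I expect the only delicate points to be bookkeeping ones: the reduction to the spanning family $\|\boldsymbol x\|^{2j}\varphi$ (which rests on the harmonic decomposition of $\mbox{Hom}_k$, a standard fact I would simply cite), the correct handling of the convention at $S_i=\{0\}$ when $0\in X$, and the radius-by-radius regrouping of the weighted sum; the one genuinely substantive input is the vanishing of the spherical mean of a positive-degree harmonic polynomial.
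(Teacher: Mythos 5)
The paper does not actually prove this theorem: it is quoted as a known result of Neumaier and Seidel and then used as a black box, so there is no in-paper argument to compare yours against. That said, your proof is correct and is the standard derivation: the reduction by linearity to the spanning family $\|\boldsymbol x\|^{2j}\varphi$ via the harmonic decomposition of $\mbox{Hom}_k(\RR^n)$ (valid since $n\ge 2$, as the paper assumes), the vanishing of the spherical mean of a positive-degree harmonic polynomial, the automatic validity of the design identity for $l=0$ after regrouping the weighted sum by spheres, and the check that the convention at $S_i=\{0\}$ causes no trouble are all handled properly, and the index bookkeeping $l+2j\le t$, $l\ge 1$ versus $1\le l\le t$, $0\le j\le\lfloor\frac{t-l}{2}\rfloor$ is right. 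I see no gap.
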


We will use the condition (2) of Theorem \ref{TheoN-S}
in what follows.  Theorem \ref{TheoN-S} implies the
following proposition.

\begin{proposition}[\cite{B-B-2}, Proposition 2.4]
\label{Prop2.2}
Let $(X,w)$ be a Euclidean $t$-design in $\RR^n$.
Then the following (1) and (2) hold:
\begin{itemize}
\item[(1)] Let $\lambda$ be a positive real number and $X^{\prime}=\{\lambda \boldsymbol u,~\boldsymbol u \in X\}.$  Then $X^{\prime}$ is also a Euclidean $t$-design with weight $w^{\prime}$ defined by $w^{\prime}=w(\frac{1}{\lambda} \boldsymbol u^{\prime}),
~\boldsymbol u^{\prime} \in X^{\prime}.$
\item[(2)] Let $\mu$ be a positive real number and $w^{\prime}(\boldsymbol u)=\mu w(\boldsymbol u)$ for any $\boldsymbol u \in X$.  Then X is also a Euclidean $t$-design with respect to the weight $w^{\prime}$.
\end{itemize}
\end{proposition}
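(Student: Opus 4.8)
The plan is to reduce both statements to the Neumaier--Seidel criterion, Theorem~\ref{TheoN-S}(2), which characterizes a Euclidean $t$-design $(X,w)$ by the vanishing of the sums $\sum_{\boldsymbol u \in X} w(\boldsymbol u)\|\boldsymbol u\|^{2j}\varphi(\boldsymbol u)$ for every $\varphi \in \mbox{Harm}_l(\RR^n)$ with $1\le l \le t$ and $0 \le j \le \lfloor\frac{t-l}{2}\rfloor$. Both parts then follow by direct manipulation of these sums, and the key point is that the index set for the pairs $(l,j)$ depends only on $t$, which is unchanged by either operation.

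For part (2), I would simply note that replacing $w$ by $w'=\mu w$ multiplies each of these sums by the constant $\mu$: $\sum_{\boldsymbol u\in X}\mu w(\boldsymbol u)\|\boldsymbol u\|^{2j}\varphi(\boldsymbol u)=\mu\cdot 0=0$. Since $\mu>0$, $w'$ is again a positive real valued weight function on $X$, so by Theorem~\ref{TheoN-S} the pair $(X,w')$ is a Euclidean $t$-design. This is immediate from linearity.

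For part (1), write a generic point of $X'=\lambda X$ as $\boldsymbol u'=\lambda\boldsymbol u$ with $\boldsymbol u\in X$, so that $w'(\boldsymbol u')=w(\frac1\lambda\boldsymbol u')=w(\boldsymbol u)$ is again a positive weight. Fix $\varphi\in\mbox{Harm}_l(\RR^n)$ with $1\le l\le t$ and $0\le j\le\lfloor\frac{t-l}{2}\rfloor$. Using $\|\lambda\boldsymbol u\|=\lambda\|\boldsymbol u\|$ (as $\lambda>0$) and the homogeneity $\varphi(\lambda\boldsymbol u)=\lambda^l\varphi(\boldsymbol u)$ of the degree-$l$ form $\varphi$, I compute
\[
\sum_{\boldsymbol u'\in X'}w'(\boldsymbol u')\|\boldsymbol u'\|^{2j}\varphi(\boldsymbol u')
=\sum_{\boldsymbol u\in X}w(\boldsymbol u)\,\lambda^{2j}\|\boldsymbol u\|^{2j}\,\lambda^l\varphi(\boldsymbol u)
=\lambda^{2j+l}\sum_{\boldsymbol u\in X}w(\boldsymbol u)\|\boldsymbol u\|^{2j}\varphi(\boldsymbol u)=0,
\]
the last equality because $(X,w)$ is a Euclidean $t$-design. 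Hence $(X',w')$ satisfies condition (2) of Theorem~\ref{TheoN-S} and is a Euclidean $t$-design.

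There is essentially no obstacle here; the only points deserving a word of care are that $w'$ remains a legitimate positive weight in each case, and that one should argue via the harmonic form of the design condition rather than Definition~\ref{Def1.2} directly. If one insisted on working from the definition, one would instead have to track how the concentric spheres $S_i$, their normalized measures $\sigma_i$, and the averages $\frac{1}{|S_i|}\int_{S_i}f\,d\sigma_i$ transform under the dilation $\boldsymbol x\mapsto\lambda\boldsymbol x$ — a strictly more cumbersome route to the same conclusion.
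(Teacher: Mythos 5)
Your proof is correct and follows exactly the route the paper indicates: the paper states only that Theorem~\ref{TheoN-S} implies this proposition (deferring details to \cite{B-B-2}), and your verification via linearity in $w$ for part (2) and via $\|\lambda\boldsymbol u\|^{2j}=\lambda^{2j}\|\boldsymbol u\|^{2j}$ together with the homogeneity $\varphi(\lambda\boldsymbol u)=\lambda^l\varphi(\boldsymbol u)$ for part (1) is precisely that argument. Nothing is missing.
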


\begin{remark}
The concept of spherical designs can be obviously generalized to a finite subset in a sphere with arbitrary radius $r.$ For this purpose, we just need to replace the unit sphere $S^{n-1}$ by $S^{n-1}(r)$, the sphere of radius $r$, in the formula (\ref{Eq1.1}) in Definition \ref{Def1.1}. Therefore, we regard the spherical designs as Euclidean designs with $p=1$ and constant weight function $w(x).$\end{remark}

We also need the proposition below in the subsequent sections.

\begin{proposition}\label{Prop2.4}
Let $(X,w)$ be a tight $2e$-design or antipodal tight $(2e+1)$-design on a union of
concentric spheres $S$ in $\RR^n.$  Then the weight function $w$ is constant on each sphere.
\end{proposition}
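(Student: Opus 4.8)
The plan is to convert the tightness hypothesis into a statement about a reproducing kernel and then to exploit the $O(n)$-invariance of the spheres $S_i$. Consider on the space $\mbox{Pol}_e(S)$ (and, in the antipodal case, on $\mbox{Pol}_e^*(S)$) the symmetric bilinear form
\[
B(f,g)=\sum_{i=1}^p\frac{w(X_i)}{|S_i|}\int_{S_i}f(\boldsymbol x)g(\boldsymbol x)\,d\sigma_i(\boldsymbol x).
\]
For $f,g$ in that space the product $fg$ has degree at most $2e$, so Definition \ref{Def1.2} (applied with $t=2e$, resp.\ $t=2e+1$) gives $B(f,g)=\sum_{\boldsymbol x\in X}w(\boldsymbol x)f(\boldsymbol x)g(\boldsymbol x)$; using $w(-\boldsymbol x)=w(\boldsymbol x)$ together with the fact that every element of $\mbox{Pol}_e^*$ has a definite parity, in the antipodal case this sum may be rewritten as $2\sum_{\boldsymbol x\in X^*}w(\boldsymbol x)f(\boldsymbol x)g(\boldsymbol x)$, with an obvious modification if $\boldsymbol 0\in X$. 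The form $B$ is positive definite, since a nonzero polynomial of degree $\le e$ cannot vanish on all of the $S_i$ and each $w(X_i)>0$ (the $X_i$ being nonempty by the choice of the $r_i$); hence so is the corresponding discrete form, and therefore the evaluation map $f\mapsto(f(\boldsymbol x))_{\boldsymbol x\in X}$ (resp.\ its restriction to $X^*$) is injective. By the tightness assumption $|X|=\dim\mbox{Pol}_e(S)$ (resp.\ $|X^*|=\dim\mbox{Pol}_e^*(S)$) it is then a linear isomorphism.

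Using this isomorphism, for each $\boldsymbol y\in X$ (resp.\ $\boldsymbol y\in X^*$) let $F_{\boldsymbol y}$ be the unique element of $\mbox{Pol}_e(S)$ (resp.\ $\mbox{Pol}_e^*(S)$) with $F_{\boldsymbol y}(\boldsymbol x)=\delta_{\boldsymbol x,\boldsymbol y}$ at the relevant points, and let $K(\boldsymbol x,\boldsymbol z)$ denote the reproducing kernel of the inner product space $(\mbox{Pol}_e(S),B)$ (resp.\ $(\mbox{Pol}_e^*(S),B)$), characterised by $B(f,K(\cdot,\boldsymbol z))=f(\boldsymbol z)$. Substituting $f=F_{\boldsymbol y}$ into this identity and using the discrete expression for $B$ collapses the sum to a single term, giving $F_{\boldsymbol y}(\boldsymbol z)=w(\boldsymbol y)K(\boldsymbol y,\boldsymbol z)$ in the $2e$-design case and $F_{\boldsymbol y}(\boldsymbol z)=2w(\boldsymbol y)K(\boldsymbol y,\boldsymbol z)$ in the antipodal case for $\boldsymbol y\neq\boldsymbol 0$. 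Evaluating at $\boldsymbol z=\boldsymbol y$ and using $F_{\boldsymbol y}(\boldsymbol y)=1$ yields $w(\boldsymbol y)=1/K(\boldsymbol y,\boldsymbol y)$ (resp.\ $1/(2K(\boldsymbol y,\boldsymbol y))$). Thus it suffices to show that $\boldsymbol y\mapsto K(\boldsymbol y,\boldsymbol y)$ is constant on each sphere $S_i$.

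This last point is where the geometry enters. Every $g\in O(n)$ maps each $S_i$ onto itself and preserves $\sigma_i$, so $B(f\circ g,h\circ g)=B(f,h)$; since $\mbox{Pol}_e(S)$ (resp.\ $\mbox{Pol}_e^*(S)$) is $O(n)$-stable, uniqueness of the reproducing kernel forces $K(g\boldsymbol x,g\boldsymbol z)=K(\boldsymbol x,\boldsymbol z)$ for all $g\in O(n)$. As $O(n)$ acts transitively on each $S_i$, the diagonal value $K(\boldsymbol y,\boldsymbol y)$ depends only on $\|\boldsymbol y\|$, so $w$ is constant on each $X_i=X\cap S_i$; in the antipodal case the relation $w(-\boldsymbol y)=w(\boldsymbol y)$ extends this from $X^*\cap S_i$ to all of $X_i$, and the sphere of radius $0$ is trivial. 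I expect the main technical nuisance to be the parity and origin bookkeeping in the antipodal $(2e+1)$-case — keeping the factor $2$ and the possible point $\boldsymbol 0$ consistent throughout — while the reproducing-kernel-plus-transitivity core of the argument is short and robust once the evaluation isomorphism is in place.
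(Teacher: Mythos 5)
Your argument is correct, and it is worth noting that the paper itself gives no proof of Proposition \ref{Prop2.4} at all --- it simply cites \cite{B-B-2} and \cite{Et}. Your reproducing-kernel formulation is, at bottom, the same mechanism those references (and Proposition \ref{Prop2.6} of this paper) use: tightness makes the evaluation map $\mbox{Pol}_e(S)\to\RR^X$ an isomorphism, equivalently the matrix $M$ of Proposition \ref{Prop2.6} is square with $M\,{}^tM={}^tMM=I$, and the diagonal identity $M\,{}^tM(\boldsymbol u,\boldsymbol u)=1$ is exactly your $w(\boldsymbol u)K(\boldsymbol u,\boldsymbol u)=1$. The difference is that the paper's equation (\ref{Eq2.3}) exhibits $K(\boldsymbol u,\boldsymbol u)$ explicitly via the Gegenbauer addition formula as $\sum_{l,j}\|\boldsymbol u\|^{2l}g_{l,j}^2(\boldsymbol u)Q_l(1)+\sum_j g_{0,j}^2(\boldsymbol u)$, which is visibly a function of $\|\boldsymbol u\|$ alone, whereas you obtain the same radial dependence abstractly from uniqueness of the reproducing kernel together with $O(n)$-invariance of $B$ and transitivity of $O(n)$ on each $S_i$. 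Your route is cleaner in that it needs no harmonic-analysis machinery (no orthonormal bases of $\mbox{Harm}_l$, no addition formula), at the cost of being less explicit; the paper's route yields the quantitative identities (\ref{Eq2.3})--(\ref{Eq2.4}) that are then reused elsewhere. Your handling of the antipodal case (parity of elements of $\mbox{Pol}_e^*$, the factor $2$, the origin) is consistent; note only that if $\boldsymbol 0\in X$ and $e$ is odd every element of $\mbox{Pol}_e^*$ vanishes at the origin, so surjectivity of the evaluation map onto $\RR^{X^*}$ would fail --- your own injectivity-plus-dimension-count argument then shows this configuration simply cannot occur for a tight design, so no case is lost.
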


\begin{proof}
See \cite{B-B-2} for $2e$-design case  and \cite{Et} for $(2e+1)$-design case.
\end{proof}

Let $(X,w)$ be a finite weighted subset in $\RR^n$.  Let $S_1,S_2,\ldots,S_p$ be the $p$ concentric spheres supporting $X$ and let $\displaystyle S= \bigcup_{i=1}^p S_i$.

For any $\varphi,~\psi \in \mbox{Harm}(\RR^n),$ we define the following inner-product
\[
\langle \varphi, \psi \rangle = \frac{1}{|S^{n-1}|} \int_{S^{n-1}}
\varphi(\boldsymbol x)
\psi(\boldsymbol x)
d\sigma(\boldsymbol x).
\]

Then the following properties are well known (see \cite{Erdel, D-G-S, D-S, B-B-1, Et}).

\begin{proposition}\label{Prop2.5}
(1) $\emph{Harm}(\RR^n)$ is a positive definite inner-product space under $\langle -,-\rangle$ and has the orthogonal decomposition $\displaystyle \emph{Harm}(\RR^n)= \perp _{i=0}^\infty \emph{Harm}_i(\RR^n).$\\
(2) $\displaystyle \emph{Pol}_e(\RR^n)=\bigoplus _{0 \leq i+2j \leq e}
\|\boldsymbol x\|^{2j}\emph{Harm}_i(\RR^n) \mbox{ with }\dim(\emph{Pol}_e(\RR^n))=\left( n+e \atop e  \right).$ \\
(3) $\displaystyle \emph{Pol}_e(S) = \left\langle \| \boldsymbol x \|^{2j}~|~ 0 \leq j \leq ~\min\left\{p-1,\left[\frac{e}{2}\right]\right\} \right\rangle \oplus
\left\{\bigoplus _{
1 \leq i \leq e, \atop
0 \leq j \leq \min \{p-\varepsilon_S-1, \left[\frac{e-i}{2} \right] \}
} \| \boldsymbol x \|^{2j} \emph{Harm}_i(S) \right\}$.\\
Furthermore if $p \leq [\frac{e+\varepsilon_S}{2}],$ then
$$
\dim(\emph{Pol}_e(S)) = \varepsilon_S + \sum_{i=0}^{2(p-\varepsilon_S)-1} \left(\begin{array}{c} n+e-i-1\\ n-1 \end{array} \right),
$$
and if $p \geq [\frac{e + \varepsilon_S}{2}]+1$, then
$$
\dim(\emph{Pol}_e(S))=\left(\begin{array}{c} n+e \\ e \end{array} \right),
$$
where $e$ is a non-negative integer.\\
(4) $\displaystyle \emph{Pol}_e^*(\RR^n)=\bigoplus_{i=0}^{[\frac{e}{2}]}\bigoplus _{j=0}^{[\frac{e-2i}{2}]}
\|\boldsymbol x\|^{2j}\emph{Harm}_{e-2i-2j}(\RR^n) \mbox{ with }\dim(\emph{Pol}_e^*(\RR^n))=\sum_{i=0}^{[\frac{e}{2}]}\left( n+e-2i-1 \atop n-1  \right).$ \\
(5) $\displaystyle \emph{Pol}_e^*(S)=\bigoplus_{i=0}^{[\frac{e}{2}]}\bigoplus _{j=0}^{[\frac{e-2i}{2}]}
\|\boldsymbol x\|^{2j}\emph{Harm}_{e-2i-2j}(S)$.  Furthermore if $p \leq [\frac{e}{2}]$, then
\[ \displaystyle
\left\{
\begin{array}{ll}
\displaystyle \dim(\emph{Pol}_e^*(S)) = \sum_{i=0}^{p-1} \left(\begin{array}{c} n+e-2i-1\\ n-1 \end{array} \right) < \dim(\emph{Pol}_e^*(\RR^n)), & e ~\mbox{is odd or even and } 0 \notin S,\\
\displaystyle \dim(\emph{Pol}_e^*(S)) = 1 + \sum_{i=0}^{p-2} \left(\begin{array}{c} n+e-2i-1\\ n-1 \end{array} \right) < \dim(\emph{Pol}_e^*(\RR^n)), & e~\mbox{is even and } 0 \in S,
\end{array}
\right.
\]
and if $p \geq [\frac{e}{2}]+1$, then
\[
\dim(\emph{Pol}_e^*(S)) = \sum_{i=0}^{[\frac{e}{2}]} \left(\begin{array}{c} n+e-2i-1\\ n-1 \end{array} \right) = \dim(\emph{Pol}_e^*(\RR^n)).
\]
\end{proposition}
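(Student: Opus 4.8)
The plan is to reduce the entire statement to the classical theory of spherical harmonics on the unit sphere, supplemented by a Vandermonde-type analysis of what happens when polynomials are restricted to a union $S$ of $p$ concentric spheres. Parts (1), (2) and (4) I would assemble directly from standard sources such as \cite{Erdel, D-G-S}. For (1): $\langle\cdot,\cdot\rangle$ is a nonnegative symmetric form, it is positive definite because a harmonic polynomial vanishing on $S^{n-1}$ is identically zero, and $\mathrm{Harm}_i(\RR^n)\perp\mathrm{Harm}_j(\RR^n)$ for $i\neq j$ because these spaces restrict to eigenspaces of the Laplace--Beltrami operator on $S^{n-1}$ for the distinct eigenvalues $-i(i+n-2)$ (Green's identity on the unit ball), while $\mathrm{Harm}(\RR^n)=\sum_i\mathrm{Harm}_i(\RR^n)$ since every homogeneous component of a harmonic polynomial is harmonic. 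For (2): iterate the Fischer decomposition $\mathrm{Hom}_l(\RR^n)=\mathrm{Harm}_l(\RR^n)\oplus\|\boldsymbol x\|^2\mathrm{Hom}_{l-2}(\RR^n)$ to get $\mathrm{Hom}_l(\RR^n)=\bigoplus_{2j\le l}\|\boldsymbol x\|^{2j}\mathrm{Harm}_{l-2j}(\RR^n)$, sum over $0\le l\le e$, reindex by $i=l-2j$, and count monomials of degree $\le e$ for the dimension. Part (4) is the same computation carried out only over the $\mathrm{Hom}_i(\RR^n)$ with $i\equiv e\pmod 2$ and $0\le i\le e$, which by definition span $\mathrm{Pol}_e^*(\RR^n)$, followed by regrouping the summands by harmonic degree.

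For (3) and (5) I would restrict the decompositions of (2) and (4) to $S$; this exhibits $\mathrm{Pol}_e(S)$ and $\mathrm{Pol}_e^*(S)$ as \emph{sums} of the subspaces $\|\boldsymbol x\|^{2j}\mathrm{Harm}_i(S)$, and the task is to pare these down to the stated direct sums. The crucial point is that on the sphere $S_k$ the function $\|\boldsymbol x\|^{2j}$ equals the constant $r_k^{2j}$, and $r_1^2,\dots,r_p^2$ are $p$ pairwise distinct numbers, exactly one of which is $0$ when $\varepsilon_S=1$. Hence for the constant part ($i=0$) the restrictions $\|\boldsymbol x\|^{2j}|_S$ span a space of dimension $\min\{p,[e/2]+1\}$ by nonsingularity of the Vandermonde matrix on $r_1^2,\dots,r_p^2$, forcing the range $0\le j\le\min\{p-1,[e/2]\}$; and for $i\ge1$, since every $\varphi\in\mathrm{Harm}_i(\RR^n)$ vanishes at the origin, a relation $\sum_j\|\boldsymbol x\|^{2j}\varphi_j\equiv0$ on $S$ means $\sum_j r_k^{2j}\varphi_j|_{S_k}=0$ on the $p-\varepsilon_S$ spheres of nonzero radius, so that (using injectivity of $\mathrm{Harm}_i(\RR^n)\to\mathrm{Harm}_i(S_k)$ for $r_k\ne0$ and Vandermonde nonsingularity on the $p-\varepsilon_S$ distinct values $r_k^2\ne0$) all $\varphi_j$ vanish once $j$ runs only over $0,\dots,p-\varepsilon_S-1$; with the degree constraint this gives $0\le j\le\min\{p-\varepsilon_S-1,[(e-i)/2]\}$. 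Directness of the outer sum over $i$ I would obtain by restricting to each $S_k$ separately: there $\|\boldsymbol x\|^{2j}\mathrm{Harm}_i(S_k)=r_k^{2j}\mathrm{Harm}_i(S_k)$ sits inside the $i$-th Laplace--Beltrami eigenspace of $S_k$, these eigenspaces are mutually orthogonal, so any relation among the summands splits according to $i$ and reduces to the single-degree case already handled. The one structural difference between (3) and (5) is that in the starred decomposition a degree-$0$ summand is present exactly when $e$ is even; since that summand alone does not vanish at the origin, it has to be counted separately, which is precisely the source of the case distinctions in (5).

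The dimension formulas then follow by bookkeeping. From $\dim\mathrm{Harm}_0(\RR^n)=1$, $\dim\mathrm{Harm}_i(\RR^n)=\binom{n+i-1}{n-1}-\binom{n+i-3}{n-1}$, and the Fischer identity $\dim\mathrm{Hom}_m(\RR^n)=\sum_{k\ge0}\dim\mathrm{Harm}_{m-2k}(\RR^n)$, each sum $\sum_j\dim\bigl(\|\boldsymbol x\|^{2j}\mathrm{Harm}_i(S)\bigr)$ telescopes into the displayed binomial sums; the number $[(e+\varepsilon_S)/2]$ is exactly the value of $p$ above which none of the $j$-ranges above is truncated, so that for larger $p$ one recovers $\binom{n+e}{e}$ (resp.\ $\sum_{i=0}^{[e/2]}\binom{n+e-2i-1}{n-1}$) from (2) (resp.\ (4)), and for smaller $p$ the truncations produce the stated closed forms, the summand $\varepsilon_S$ in (3) (resp.\ the leading $1$ in the middle case of (5)) recording that the constant function is visible on the origin sphere as well. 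I expect nothing conceptually hard here; the only genuinely delicate step is this last accounting --- keeping the truncation thresholds, the role of $\varepsilon_S$, and the two regimes of the constant term all straight at once --- and one could instead quote the analogous computations in \cite{D-S, Et}.
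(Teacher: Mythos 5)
Your proposal is correct, and it coincides with the paper's treatment in the only sense available: the paper offers no proof of this proposition, simply declaring it ``well known'' and citing \cite{Erdel, D-G-S, D-S, B-B-1, Et}, and your sketch is a faithful reconstruction of precisely the standard arguments in those sources (Fischer decomposition of $\mathrm{Hom}_l$, restriction to the concentric spheres, Vandermonde nonsingularity on the distinct squared radii with the origin handled via $\varepsilon_S$, and Laplace--Beltrami eigenspace orthogonality for directness). Nothing further is needed.
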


Let $h_l=\dim(\mbox{Harm}_l(\RR^n))$ and $\varphi_{l,1}, \ldots, \varphi_{l,h_l}$ be an orthonormal basis of $\mbox{Harm}_l(\RR^n)$ with respect to the inner-product defined above.  Then, by Proposition \ref{Prop2.5},

\[
\begin{array}{l}
\displaystyle \left\{
\|\boldsymbol x \|^{2j},~0 \leq j \leq \min\left\{p-1,\left[\frac{e}{2} \right]\right \} \right\} \bigcup \\
\displaystyle  \left\{
\|\boldsymbol x\|^{2j}
\varphi_{l,i}(\boldsymbol x),~1 \leq l
\leq e,~1 \leq i \leq h_l,~0 \leq j \leq \min \left\{p-\varepsilon_S-1,\left[\frac{e-l}{2} \right]\right \} \right\}
\end{array}
\]
gives a basis of $\mbox{Pol}_e(S)$.

Now, we are going to construct a more convenient basis of $\mbox{Pol}_e(S)$ for our purpose.  Let $\mathcal{G}(\RR^n)$ be the subspace of $\mbox{Pol}_e(S)$ spanned by $\{
\|\boldsymbol x\|^{2j},~0 \leq j \leq p-1 \}$.  Let $\mathcal{G}(X)=\{g|_X,~g \in \mathcal{G}(\RR^n) \}$.  Then $\{\| \boldsymbol x \|^{2j},~ 0 \leq j \leq p-1 \}$ is a basis of $\mathcal{G}(X)$.  We define an inner-product $\langle -,-\rangle_l$ on $\mathcal{G}(X)$ by

\begin{equation}\label{Eq2.1}
\langle f, g \rangle_l =
\sum_{\boldsymbol x \in X}
w(\boldsymbol x)
\| \boldsymbol x \|^{2l}f(\boldsymbol x)
g(\boldsymbol x),\mbox{ for }1 \leq l \leq e.
\end{equation}

We apply the Gram-Schmidt method  to the basis $\{\|\boldsymbol x\|^{2j},~0 \leq j \leq p-1 \}$ to construct an orthonormal basis
\[
\{g_{l,0}(\boldsymbol x),~
g_{l,1}(\boldsymbol x),~\ldots,~
g_{l,p-1}(\boldsymbol x) \}
\]
of $\mathcal{G}(X)$ with respect to the inner-product $\langle-,-\rangle_l$.  We can construct them so that for any $l$ the following holds:
\[
\begin{array}{l}
g_{l,j}(\boldsymbol x) \mbox{ is a linear combination of }1,~\|\boldsymbol x\|^2,~\ldots,~\|\boldsymbol x\|^{2j}, \mbox{ with }\deg(g_{l,j})=2j,\\
\mbox{for } 0 \leq j \leq p-1.
\end{array}
\]
For example, we can express
$g_{l,0}(\boldsymbol x)$ as
\begin{equation}\label{Eq2.2}
g_{l,0}(\boldsymbol x) \equiv \frac{1}{\sqrt{a_l}}, \mbox{ with } a_l
=\sum_{\boldsymbol x \in X}
w(\boldsymbol x) \| \boldsymbol x \|^{2l}.
\end{equation}
Now we are ready to give a new basis for $\mbox{Pol}_e(S).$  Let us consider the following sets:
\[
\begin{array}{l}
\displaystyle \mathcal{H}_0=\left\{g_{0,j}~\bigm|~ 0 \leq j \leq \min\left\{p-1,\left[\frac{e}{2}\right]\right\} \right\},\\
\displaystyle \mathcal{H}_l=\left\{g_{l,j}\varphi_{l,i}~\bigm|~0 \leq j \leq \min\left \{p-\varepsilon_S-1, \left[\frac{e-l}{2}\right]\right \},~1 \leq i \leq h_l \right\}, \mbox{ for } 1 \leq l \leq e.
\end{array}
\]
Then $\displaystyle \mathcal{H}=\bigcup_{l=0}^e \mathcal{H}_l$ is a basis of $\mbox{Pol}_e(S)$.

\begin{proposition}\label{Prop2.6} If $(X,w)$ is a tight $2e$-design on $S$, then the following (1) and (2) hold:
\begin{itemize}
\item[(1)] The weight function of $X$ satisfies
\begin{equation}\label{Eq2.3}
\sum_{1 \leq l \leq e, \atop 0 \leq j \leq \min\{p-\varepsilon_S-1, [\frac{e-l}{2}] \}} \| \boldsymbol u \|^{2l} g_{l,j}^2(\boldsymbol u) Q_l(1) + \sum _{j=0}^{\min \{p-1,[\frac{e}{2}]\}} g_{0,j}^2(\boldsymbol u) = \frac{1}{w(\boldsymbol u)}, \mbox{ for all } \boldsymbol u \in X.
\end{equation}
\item[(2)] For any distinct points $\boldsymbol u,~\boldsymbol v \in X,$ we have
\begin{equation}\label{Eq2.4}
\sum_{1 \leq l \leq e, \atop 0 \leq j \leq \min\{p-\varepsilon_S-1, [\frac{e-l}{2}] \}} \| \boldsymbol u \|^l
\| \boldsymbol v \|^l g_{l,j}
(\boldsymbol u) g_{l,j}
(\boldsymbol v) Q_l\left(\frac{\langle \boldsymbol u,\boldsymbol v \rangle}{
\|\boldsymbol u\|
\|\boldsymbol v\|}\right) + \sum_{j=0}^{\min\{p-1,[\frac{e}{2}]\}} g_{0,j}(\boldsymbol u) g_{0,j}(\boldsymbol v) = 0.
\end{equation}
\end{itemize}
Here $\langle \boldsymbol u,
\boldsymbol v \rangle$ is the standard inner-product in Euclidean space $\RR^n$ and $Q_l(\alpha)$ is the Gegenbauer polynomial of degree $l$. Moreover, for the case $e=1$ the converse is also true, namely, if (1) and (2) hold, then $X$ is a tight $2e$-design on $S$.
\end{proposition}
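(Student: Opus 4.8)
\emph{Proof proposal.} The plan is to prove (1) and (2) by the familiar ``tight design $\Leftrightarrow$ reproducing kernel'' mechanism, and then to run it backwards when $e=1$. On $\mathrm{Pol}_e(S)$ introduce the symmetric bilinear form $(f,g)_X:=\sum_{\boldsymbol x\in X}w(\boldsymbol x)f(\boldsymbol x)g(\boldsymbol x)$. Since a product of two elements of $\mathrm{Pol}_e(S)$ has degree $\le 2e$ and $X$ is a Euclidean $2e$-design, Definition \ref{Def1.2} gives $(f,g)_X=\sum_{i=1}^{p}\frac{w(X_i)}{\abs{S_i}}\int_{S_i}f(\boldsymbol x)g(\boldsymbol x)\,d\sigma_i(\boldsymbol x)$; in particular $(-,-)_X$ is well defined on $\mathrm{Pol}_e(S)$ and is positive definite there (if $(f,f)_X=0$ then $f$ vanishes on every $S_i$, as each $w(X_i)>0$). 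I would then check that $\mathcal H=\bigcup_{l=0}^{e}\mathcal H_l$ is an \emph{orthonormal} basis for $(-,-)_X$: for $\mathcal H_0$ this is the Gram--Schmidt relation $\langle g_{0,j},g_{0,k}\rangle_0=\delta_{jk}$, while for the rest one uses that $g_{l,j}$ is constant on each sphere $S_i$ together with homogeneity of harmonics, which gives $\frac{1}{\abs{S_i}}\int_{S_i}\varphi_{l,i}\,d\sigma_i=0$ $(l\ge1)$ and $\frac{1}{\abs{S_i}}\int_{S_i}\varphi_{l,i}\varphi_{l',i'}\,d\sigma_i=r_i^{\,l+l'}\delta_{ll'}\delta_{ii'}$; summing over $i$ with the weights $w(X_i)$ recovers $\langle g_{l,j},g_{l,j'}\rangle_l=\delta_{jj'}$ as well as the orthogonality of $\mathcal H_0$ from $\mathcal H_l$.

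Granting this, tightness means $\abs X=\dim\mathrm{Pol}_e(S)$, and since $(f,f)_X=0\Rightarrow f=0$ on $S$ the evaluation map $\mathrm{Pol}_e(S)\to\RR^X$ is a bijection; let $F_{\boldsymbol u}\in\mathrm{Pol}_e(S)$ be the unique function with $F_{\boldsymbol u}(\boldsymbol v)=\delta_{\boldsymbol u\boldsymbol v}$ for $\boldsymbol v\in X$. Expanding $F_{\boldsymbol u}$ in the orthonormal basis $\mathcal H$ and using $(F_{\boldsymbol u},\phi)_X=w(\boldsymbol u)\phi(\boldsymbol u)$ gives $F_{\boldsymbol u}=w(\boldsymbol u)\sum_{\phi\in\mathcal H}\phi(\boldsymbol u)\,\phi$, whence $\delta_{\boldsymbol u\boldsymbol v}=w(\boldsymbol u)\sum_{\phi\in\mathcal H}\phi(\boldsymbol u)\phi(\boldsymbol v)$ for all $\boldsymbol u,\boldsymbol v\in X$. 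Splitting $\sum_{\phi\in\mathcal H}\phi(\boldsymbol u)\phi(\boldsymbol v)=\sum_{j}g_{0,j}(\boldsymbol u)g_{0,j}(\boldsymbol v)+\sum_{l\ge1}\sum_{j}g_{l,j}(\boldsymbol u)g_{l,j}(\boldsymbol v)\sum_{i=1}^{h_l}\varphi_{l,i}(\boldsymbol u)\varphi_{l,i}(\boldsymbol v)$ and inserting the Gegenbauer addition formula $\sum_{i}\varphi_{l,i}(\boldsymbol u)\varphi_{l,i}(\boldsymbol v)=\|\boldsymbol u\|^l\|\boldsymbol v\|^l\,Q_l\!\left(\tfrac{\langle\boldsymbol u,\boldsymbol v\rangle}{\|\boldsymbol u\|\,\|\boldsymbol v\|}\right)$ (so that $\sum_i\varphi_{l,i}(\boldsymbol u)^2=\|\boldsymbol u\|^{2l}Q_l(1)$), the case $\boldsymbol u=\boldsymbol v$ is (\ref{Eq2.3}) and the case $\boldsymbol u\ne\boldsymbol v$ is (\ref{Eq2.4}). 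The only point needing real care is the first paragraph's assertion that $(-,-)_X$ coincides on $\mathrm{Pol}_e(S)$ with the integral form; this is precisely where the full strength ``degree $\le 2e$'' (not merely ``$\le e$'') of the design is spent, and it is what makes $\mathcal H$ orthonormal.

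For the converse when $e=1$ one has $\mathcal H=\{g_{0,0}\}\cup\{g_{1,0}\varphi_{1,i}:1\le i\le n\}$ with $g_{0,0}^2\equiv 1/a_0$, $g_{1,0}^2\equiv 1/a_1$, $\varphi_{1,i}(\boldsymbol x)=\sqrt n\,x_i$ and $Q_1(\alpha)=n\alpha$, so (\ref{Eq2.3})--(\ref{Eq2.4}) assert precisely that the degree-one kernel $K(\boldsymbol u,\boldsymbol v)=\tfrac1{a_0}+\tfrac{n}{a_1}\langle\boldsymbol u,\boldsymbol v\rangle$ satisfies $w(\boldsymbol u)K(\boldsymbol u,\boldsymbol v)=\delta_{\boldsymbol u\boldsymbol v}$ on $X\times X$, equivalently $\sum_{\boldsymbol v\in X}w(\boldsymbol v)K(\boldsymbol u,\boldsymbol v)h(\boldsymbol v)=h(\boldsymbol u)$ for every $h\colon X\to\RR$. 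Feeding in $h\equiv1$ forces $\langle\boldsymbol u,\sum_{\boldsymbol v}w(\boldsymbol v)\boldsymbol v\rangle=0$ for all $\boldsymbol u$, hence $\sum_{\boldsymbol v}w(\boldsymbol v)\boldsymbol v=\boldsymbol 0$; feeding in $h=x_i$ then gives $B\boldsymbol u=\tfrac{a_1}{n}\boldsymbol u$ for every $\boldsymbol u\in X$, where $B:=\sum_{\boldsymbol v}w(\boldsymbol v)\boldsymbol v\boldsymbol v^{\!\top}$. Were $\mathrm{span}(X)$ a proper subspace $V$, then $B$ would equal $\tfrac{a_1}{n}I$ on $V$ and $0$ on $V^{\perp}$, forcing $\mathrm{tr}\,B=\tfrac{a_1}{n}\dim V$, which contradicts $\mathrm{tr}\,B=\sum_{\boldsymbol v}w(\boldsymbol v)\|\boldsymbol v\|^2=a_1$; hence $X$ spans $\RR^n$ and $B=\tfrac{a_1}{n}I$. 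Thus $\sum_{\boldsymbol v}w(\boldsymbol v)\boldsymbol v=\boldsymbol 0$ and $\sum_{\boldsymbol v}w(\boldsymbol v)x_ix_j=\tfrac{a_1}{n}\delta_{ij}$, which is exactly the Neumaier--Seidel condition of Theorem \ref{TheoN-S} for $l=1$ and $l=2$ (a harmonic quadratic being traceless), so $X$ is a Euclidean $2$-design; and since (\ref{Eq2.4}) makes $X$ a $1$-inner product set with a negative value, $\abs X\le n+1$ by \cite{D-F}, while $X$ spanning $\RR^n$ together with the positive relation $\sum_{\boldsymbol v}w(\boldsymbol v)\boldsymbol v=\boldsymbol 0$ excludes $\abs X\le n$; hence $\abs X=n+1=\dim\mathrm{Pol}_1(S)$ and $X$ is a tight $2$-design on $S$. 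The non-formal step in this direction is the trace argument forcing $X$ to span $\RR^n$: without it one could not pass from ``every point of $X$ is an eigenvector of $B$ with eigenvalue $a_1/n$'' to $B=\tfrac{a_1}{n}I$, and the $2$-design property for quadratics would break.
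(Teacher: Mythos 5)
Your proposal is correct, and the forward direction is essentially the paper's own argument in different clothing: your statement that $\mathcal H$ is orthonormal for $(f,g)_X=\sum_{\boldsymbol x\in X}w(\boldsymbol x)f(\boldsymbol x)g(\boldsymbol x)$ is exactly the paper's ${}^tMM=I$, your delta-function/reproducing-kernel identity $\delta_{\boldsymbol u\boldsymbol v}=w(\boldsymbol u)\sum_{\phi\in\mathcal H}\phi(\boldsymbol u)\phi(\boldsymbol v)$ is exactly $M\,{}^tM=I$ obtained from squareness of $M$ under tightness, and both proofs then finish with the Gegenbauer addition formula. Where you genuinely diverge is the converse for $e=1$. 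The paper reads the Neumaier--Seidel conditions for $l=1,2$ directly off the entries of ${}^tMM=I$; but the hypothesis (1)--(2) is $M\,{}^tM=I$, and passing back to ${}^tMM=I$ tacitly assumes $M$ is square, i.e.\ $|X|=n+1$, which is part of what is to be proved -- and the paper never explicitly verifies the cardinality count $|X|=\dim\mathrm{Pol}_1(S)$ at all. Your route repairs both points: you extract $\sum_{\boldsymbol v}w(\boldsymbol v)\boldsymbol v=\boldsymbol 0$ and $B\boldsymbol u=\frac{a_1}{n}\boldsymbol u$ from the reproducing property alone, use the trace identity $\mathrm{tr}\,B=a_1$ to force $X$ to span $\RR^n$ and hence $B=\frac{a_1}{n}I$ (giving the $\mathrm{Harm}_1$ and $\mathrm{Harm}_2$ conditions without any squareness assumption), and then pin down $|X|=n+1=\dim\mathrm{Pol}_1(S)$ by combining the Deza--Frankl bound for the resulting $1$-inner product set with the nontrivial positive dependency $\sum_{\boldsymbol v}w(\boldsymbol v)\boldsymbol v=\boldsymbol 0$. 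This costs a little more work but yields a converse that is complete as stated, including the tightness conclusion; the paper's version is shorter but leans on the matrix being square. Two small points you should still write out: the step from $\langle\boldsymbol u,\sum_{\boldsymbol v}w(\boldsymbol v)\boldsymbol v\rangle=0$ for all $\boldsymbol u\in X$ to $\sum_{\boldsymbol v}w(\boldsymbol v)\boldsymbol v=\boldsymbol 0$ uses that this vector lies in $\mathrm{span}(X)$ while being orthogonal to it, and the identification $n+1=\dim\mathrm{Pol}_1(S)$ needs the (easy) observation that a spanning $(n+1)$-set contains at most one point at the origin, so Proposition \ref{Prop2.5}(3) applies with $p\geq\varepsilon_S+1$.
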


\begin{proof}(\emph{c.f.} \cite{B-B-2})  Let $X$ be a tight $2e$-design on $S$.  Let $M$ be the matrix indexed by $X \times \mathcal{H}$ which is defined by
\[
M(\boldsymbol u,~g_{l,j}\varphi_{l,i})=\sqrt{w(\boldsymbol u)}g_{l,j}(\boldsymbol u)\varphi_{l,i}
(\boldsymbol u).
\]
Then $^tMM=I.$  Furthermore, since $X$ is tight, then $M$ is square, and hence $M~^tM=I$.  Therefore, for nonzero vectors $\boldsymbol u,\boldsymbol v \in X,$ we have
\begin{align*}
\frac{M~^tM(\boldsymbol u,
\boldsymbol v)}{\sqrt{w(\boldsymbol u)
w(\boldsymbol v)}} & =  \sum_{1 \leq l \leq e, \atop 0 \leq j \leq \min\{p-\varepsilon_S-1,[\frac{e-l}{2}]\}}
\|\boldsymbol u\|^l \|\boldsymbol v\|^l
g_{l,j}(\boldsymbol u)g_{l,j}
(\boldsymbol v)Q_l\left(
\frac{\langle \boldsymbol u,
\boldsymbol v \rangle}
{\|\boldsymbol u\| \|\boldsymbol v\|}\right)\\
&  + \sum_{j=0}^{\min\{p-1,[\frac{e}{2}]\}}g_{0,j}(\boldsymbol u)g_{0,j}(\boldsymbol v).
\end{align*}
Hence, by restricting $\boldsymbol u
=\boldsymbol v$ and $\boldsymbol u \neq \boldsymbol v$, we have (1) and (2), respectively.

Now, we prove that if $e=1$ the converse holds.  By Theorem \ref{TheoN-S}, it is enough to show

\[
\sum_{\boldsymbol u \in X}
w(\boldsymbol u)\varphi(\boldsymbol u)=0, \mbox{ for any } \varphi \in \mbox{Harm}_l(\RR^n), \mbox{ with } l=1,2.
\]

It is known that $\{\varphi_{1,i}(\boldsymbol x)=cx_i,~1 \leq i \leq n \}$ forms an orthonormal basis of $\mbox{Harm}_1(\RR^n)$, where $\displaystyle c=\sqrt{\frac{|S^{n-1}|}{\int_{S^{n-1}}x_i^2 d\sigma(\boldsymbol x)}}.$  (Note that the constant $c$ is independent of the choice of the index $i$).  Then $\mbox{Harm}_2(\RR^n)$ is spanned by $\{\varphi_{1,i}\varphi_{1,j},~\varphi_{1,i}^2-\varphi_{1,j}^2,~1 \leq i < j \leq n \}$.

From $^tMM=I$, we have

\[
\sum_{\boldsymbol u \in X}
w(\boldsymbol u)g_{l,0}(\boldsymbol u)\varphi_{l,i}(\boldsymbol u)g_{l^\prime,0}(\boldsymbol u)\varphi_{l^\prime,
i^\prime}(\boldsymbol u)=\delta(i,i^\prime)\delta(l,l^\prime).
\]

For $l=1$ and $l^\prime=0,$ we have

\[
\sum_{\boldsymbol u \in X}
w(\boldsymbol u)\varphi_{1,i}(\boldsymbol u)=0, \mbox{ for any } \varphi_{1,i} \in \mbox{Harm}_1(\RR^n),
\]

while for $l=1=l^\prime$, we have

\[
\sum_{\boldsymbol u \in X}
w(\boldsymbol u)
\varphi_{1,i}(\boldsymbol u)
\varphi_{1,i^\prime}(\boldsymbol u)=\delta(i,i^\prime)\frac{1}{g_{1,0}^2}, \mbox{ for any } \varphi_{1,i},~\varphi_{1,i^\prime} \in \mbox{Harm}_1(\RR^n),
\]
since $g_{l,0}$ is a non-zero constant function on $X$.  Therefore we obtain
\[
\sum_{\boldsymbol u \in X}
w(\boldsymbol u)\varphi_{1,i}(\boldsymbol u)\varphi_{1,i^\prime}(\boldsymbol u)=0,
\]
and
\[
\sum_{\boldsymbol u \in X}w(\boldsymbol u)(\varphi_{1,i}^2(\boldsymbol u)-\varphi_{1,i^\prime}^2(\boldsymbol u))=0,
\]
for any $i \neq i^\prime$.  The result follows  from the fact that $\mbox{Harm}_2(\RR^n) = \langle \varphi_{1,i}\varphi_{1,j},~\varphi_{1,i}^2-\varphi_{1,j}^2,~1 \leq i < j \leq n \ \rangle$.
\end{proof}

\section{Rigidity of spherical and Euclidean designs}

We call a spherical $t$-design {\it non-rigid} (resp. {\it rigid}) if it cannot be (resp.  can be) deformed locally keeping the property that it is a spherical $t$-design. The exact definition is given as follows (c.f. \cite{Ban}).

\noindent
\begin{definition}\label{Def3.1}
A spherical $t$-design
$X=\{\boldsymbol x_i,~1 \leq i \leq N\} \seq S^{n-1}$ is called \emph{non-rigid} or \emph{deformable} in $\RR^n$ if for any $\varepsilon > 0$ there exists another spherical $t$-design $X^{\prime}=\{\boldsymbol x_i^{\prime},~1 \leq i \leq N\} \seq S^{n-1}$ such that the following two conditions hold:
\begin{itemize}
\item[(1)] $\|\boldsymbol x_i-
\boldsymbol x_i^{\prime} \|< \varepsilon,$  for $1 \leq i \leq N;$ and
\item[(2)] there is no any transformation $g \in O(n),$ with $g(\boldsymbol x_i)=\boldsymbol x_i^{\prime},$ for $1 \leq i \leq N.$
\end{itemize}
\end{definition}

Motivated by the above definition and Proposition \ref{Prop2.2}, we define a similar concept of rigidity and non-rigidity for Euclidean $t$-design, depending upon whether the designs can be transformed to each other by orthogonal transformations, scaling, or adjustment of the weight functions.  In the definition below, $O^*(n)=\langle O(n),g_{\lambda}, g^{\mu} \rangle$ denotes a group generated by an orthogonal group $O(n)$, a scaling $g_{\lambda}$ of $X:$
\[
g_{\lambda}:\left\{
\begin{array}{ccc}
(X,w)& \longrightarrow & (X^{\prime},w^{\prime})\\
\boldsymbol x & \longmapsto & \boldsymbol x^{\prime}
=\lambda \boldsymbol x\\
w^{\prime}(\boldsymbol x^{\prime})
=w(\boldsymbol x)
\end{array}
\right.,
\]
and an adjustment $g^{\mu}$ of weight function $w:$
\[
g^{\mu}:\left\{
\begin{array}{ccc}
(X,w)& \longrightarrow & (X^{\prime},w^{\prime})\\
\boldsymbol x & \longmapsto &
\boldsymbol x^{\prime}:
=\boldsymbol x\\
w^{\prime}(\boldsymbol x^{\prime})
=\mu w(\boldsymbol x)
\end{array}
\right..
\]

\noindent
\begin{definition}\label{Def3.2}
A Euclidean $t$-design $X=\left(
\{\boldsymbol x_i\}_{i=1}^{N},w \right) \seq \RR^n$ is called \emph{non-rigid} or \emph{deformable} in $\RR^n$ if for any $\varepsilon > 0$ there exists another Euclidean $t$-design $X^{\prime}=\left(\{\boldsymbol x_i^{\prime}\}_{i=1}^{N},w^{\prime} \right) \seq \RR^n$ such that the following two conditions hold:\begin{itemize}
\item[(1)] $\|\boldsymbol x_i-\boldsymbol x_i^{\prime} \|< \varepsilon,$ and $|w(\boldsymbol x_i)- w^{\prime}(\boldsymbol x_i^{\prime})| < \varepsilon,$ for $1 \leq i \leq N;$ and
\item[(2)] there is no any transformation $g \in O^*(n),$ with $g(\boldsymbol x_i)=\boldsymbol x_i^{\prime}$ for $1 \leq i \leq N.$
\end{itemize}
\end{definition}

It is well known that any tight spherical $t$-design is rigid, because the possible distances of any two points in the design are finitely many in number and determined by only $n$ and $t$ (see Theorem 5.11 and 5.12 in \cite{D-G-S} also see \cite{B-M-V} for the current status of the  classification of tight spherical $t$-designs).  A natural question is whether tight spherical $t$-designs are rigid as Euclidean $t$-designs.  We have the proposition below.

\noindent
\begin{proposition} \label{Prop3.6}
Any tight spherical $2e$-design is rigid as a Euclidean design, for $e\geq 2.$
\end{proposition}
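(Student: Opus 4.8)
The plan is to fix a tight spherical $2e$-design $X$ --- after applying an element of $O^*(n)$ we may assume $X\subseteq S^{n-1}$ with the constant weight $w\equiv 1$ --- and to produce $\varepsilon_0>0$ such that every Euclidean $2e$-design $X'=(\{\boldsymbol x_i'\}_{i=1}^N,w')$ satisfying condition (1) of Definition \ref{Def3.2} with $\varepsilon<\varepsilon_0$ is the image of $X$ under an element of $O^*(n)$. This makes condition (2) impossible, so $X$ is rigid. Here $N=|X|=\dim(\mbox{Pol}_e(S^{n-1}))=\binom{n+e-1}{n-1}+\binom{n+e-2}{n-1}$.

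The heart of the argument is to show $X'$ is again supported by a single sphere. Let $S'$ be the union of the $p'$ concentric spheres supporting $X'$; for $\varepsilon<1$ the origin is not a point of $X'$, so $\varepsilon_{S'}=0$. If $p'\geq 2$, then Proposition \ref{Prop2.5}(3) gives, when $p'\leq[e/2]$ (which forces $e\geq 4$, so every binomial below is positive),
\[
\dim(\mbox{Pol}_e(S'))=\sum_{i=0}^{2p'-1}\binom{n+e-i-1}{n-1}\ \geq\ \sum_{i=0}^{3}\binom{n+e-i-1}{n-1}\ >\ N,
\]
and when $p'\geq[e/2]+1$, using Pascal's rule twice,
\[
\dim(\mbox{Pol}_e(S'))=\binom{n+e}{n}=N+\binom{n+e-2}{n}\ >\ N
\]
since $e\geq 2$. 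In either case $|X'|=N<\dim(\mbox{Pol}_e(S'))$, contradicting the lower bound $|X'|\geq\dim(\mbox{Pol}_e(S'))$ valid for every Euclidean $2e$-design. Hence $p'=1$. This is exactly the step that fails for $e=1$: there the same computation yields $\dim(\mbox{Pol}_e(S'))=N$ even for $p'\geq 2$, which reflects the genuine non-rigidity of tight Euclidean $2$-designs.

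With $p'=1$ we have $|X'|=N=\dim(\mbox{Pol}_e(S'))$, so $X'$ is a tight $2e$-design on a single sphere; by Proposition \ref{Prop2.4} its weight is constant, hence after rescaling $w'$ we may regard $X'$ as a spherical $2e$-design on a sphere of radius $r'$, and every point of $X'$ lying within $\varepsilon$ of $S^{n-1}$ forces $|r'-1|<\varepsilon$. Applying the scaling $g_{1/r'}\in O^*(n)$ gives a spherical $2e$-design $X''$ on $S^{n-1}$ with $\|\boldsymbol x_i-\boldsymbol x_i''\|<2\varepsilon$ for all $i$. Now invoke the classical rigidity of tight spherical $2e$-designs (Theorems 5.11--5.12 of \cite{D-G-S}): the inner products between distinct points of a tight spherical $2e$-design form a fixed set of $e$ distinct values in $(-1,1)$ depending only on $n$ and $e$, so once $2\varepsilon$ is below half the minimal gap among these values and $1$, the Gram matrix of $X''$ coincides with that of $X$, and therefore $\boldsymbol x_i''=g(\boldsymbol x_i)$ for some $g\in O(n)$. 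Composing $g$ with the scaling $g_{r'}$ and a weight adjustment shows $(X',w')$ lies in the $O^*(n)$-orbit of $(X,w)$, contradicting condition (2). The only delicate point is the dimension count of the second paragraph together with the verification of the boundary ranges of $p'$ in Proposition \ref{Prop2.5}(3); everything after the reduction $p'=1$ is the standard rigidity of tight spherical designs.
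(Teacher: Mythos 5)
Your proof is correct and follows essentially the same route as the paper: rule out $p'\geq 2$ by the Fisher-type bound $|X'|\geq\dim(\mbox{Pol}_e(S'))$ together with Proposition \ref{Prop2.5}(3), and handle the single-sphere case by the classical rigidity of tight spherical $2e$-designs. The paper presents these two steps in the opposite order and more tersely (it quotes the bound $\binom{n+e}{e}$, which strictly speaking only applies when $p'\geq[e/2]+1$); your version checks both regimes of $p'$ and the scaling/weight normalization explicitly, but the underlying argument is the same.
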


\begin{proof}
Let $X$ be a tight spherical $2e$-design, with $e \geq 2.$ Suppose that $X$ is non-rigid as a Euclidean design, and $X$ can be deformed to $X^{\prime}.$  Since $X$ is rigid as a spherical design, $X^\prime$ cannot sit on one sphere.  It means that  $X^\prime$ should be a Euclidean $2e$-design supported by at least two spheres.  This implies that the size of $X^{\prime}$ becomes greater than the initial size of $X$,i.e., $\displaystyle |X^\prime| \geq \left(n+e \atop e \right) > |X|$, which is impossible.
\end{proof}

On the other hand, as we will show later, any tight spherical $2$- and $3$-design are non-rigid as Euclidean designs.

Now, let us consider the following two examples of tight Euclidean $4$-designs in $\RR^2$ given by Bannai and Bannai \cite{B-B-2} and also antipodal tight Euclidean $5$-designs in $\RR^2$ given in Bannai \cite{Et}.

\begin{example}[see \cite{B-B-2}]\label{Ex3.4}
Let $X(r)=X_1 \cup X_2(r)$, where $X_1=\left\{\left(1,0\right),\left(-\frac{1}{2},\frac{\sqrt{3}}{2}\right),\left(-\frac{1}{2},-\frac{\sqrt{3}}{2}\right) \right\}$ and $X_2(r)=\left\{\left(-r,0 \right),\left(\frac{r}{2},\frac{\sqrt{3}}{2}r \right),\left(\frac{r}{2},-\frac{\sqrt{3}}{2}r \right)\right\}$.  Let $w(\boldsymbol x)=1$ for $\boldsymbol x \in X_1$ and $w(\boldsymbol x)=\frac{1}{r^3}$ for $\boldsymbol x \in X_2(r)$.  If $r \neq 1$, then $X(r)$ is a tight Euclidean $4$-design.
\end{example}

\begin{example}[see \cite{Et}]\label{Ex3.5}
Let $X(r)=X_1 \cup X_2(r)$ where $X_1=\{(\pm 1,0),(0,\pm 1)\}$ and $X_2=\left\{\left(\pm \frac{r}{\sqrt{2}},\pm \frac{r}{\sqrt{2}}\right)\right\}$. Let
$w(\boldsymbol x)=1$ for
$\boldsymbol x \in X_1$ and
$w(\boldsymbol x)
=\frac{1}{r^4}$ for $\boldsymbol x
\in X_2(r)$.  If $r \neq 1,$ then $X(r)$ is an antipodal tight Euclidean $5$-design.
\end{example}

In both examples above, we can easily see that if we move all the points on $X_2(r)$ simultaneously by changing the radius $r$ while the other points remain sitting on the original position, the resulting designs are again Euclidean designs of the same type.  This kind of transformation is not contained in the group $O^*(n)$ since $X(r)$ and $X(r^{\prime})$ are not similar to each other for any $r \neq r^{\prime}$.  Hence the designs are non-rigid.

In the deformation explained above, all points on the same sphere move to the new one.  One natural question is, what will happen if we deform $X$ so that some two points from the same sphere move to distinct two spheres?  This question bring us to the notion of strong non-rigidity, a special kind of non-rigidity.

\begin{definition}[strong non-rigidity]
Let $X=\left(\{\boldsymbol x_i\}_{i=1}^N,w \right)$ be a Euclidean $t$-design in $\RR^n$.  If $X$ satisfies the following condition we say $X$ is \emph{strongly non-rigid} in $\RR^n$:\\
For any $\varepsilon >0$ there exists a Euclidean $t$-design $X^{\prime}=\left(\{\boldsymbol x_i^{\prime}\}_{i=1}^N,w^{\prime} \right)$ such that the following two conditions hold:
\begin{itemize}
\item[(1)]  $\|\boldsymbol x_i-
\boldsymbol x_i^{\prime}\| <
\varepsilon$ and $|w(\boldsymbol x_i)-w^{\prime}(\boldsymbol x_i^{\prime})| < \varepsilon,$ for any $1 \leq i \leq N$; and
\item[(2)]  There exist distinct $i,j$ satisfying $\|\boldsymbol x_i\|=\|\boldsymbol x_j\|$ and
$\|\boldsymbol x_i^{\prime}\| \neq
\|\boldsymbol x_j^{\prime} \|$.
\end{itemize}
\end{definition}

\begin{remark}
It is clear that any strongly non-rigid Euclidean $t$-design is non-rigid, since the condition (2) above implies that the transformation:
\[
\boldsymbol x_i \longmapsto
\boldsymbol x_i^{\prime}, ~1 \leq i \leq N,
\]
is not contained in $O^*(n).$
\end{remark}

In the following we will prove the theorem below.

\begin{theorem}\label{Theo3.8}
The following tight Euclidean $t$-designs are strongly non-rigid:
\begin{itemize}
\item[(1)]  Tight spherical $2$-designs in $S^{n-1}$ considered as tight Euclidean $2$-designs.
\item[(2)] Antipodal tight spherical $3$-designs in $S^1$ considered as tight Euclidean $3$-designs.
\item[(3)] Tight Euclidean $4$-designs in $\RR^2$  supported by 2 concentric spheres.
\item[(4)] Antipodal tight Euclidean $5$-designs in $\RR^2$  supported by 2 concentric spheres.
\end{itemize}
\end{theorem}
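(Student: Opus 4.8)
\medskip\noindent\textbf{Proof proposal for Theorem \ref{Theo3.8}.}
The plan is to treat the four cases on a common principle: near each given design I will exhibit a genuine multi-parameter family of Euclidean designs of the same strength and cardinality, and then show that this family contains a design, arbitrarily close to the original, in which two points that were cocircular are no longer cocircular. (Membership in the family automatically yields tightness: for the relevant $n$, $t$ and cardinality, every Euclidean $t$-design supported on $\geq 2$ concentric spheres has $|X| = \dim(\mathrm{Pol}_e(S))$ there.) Cases (1) and (2) admit completely explicit families; (3) and (4) need a short deformation argument.

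For (2): by Theorem \ref{TheoN-S}, antipodality kills every odd-degree condition, so an antipodal Euclidean $3$-design on $S^1$ with $\abs{X^*}=2$ is exactly a pair of antipodal pairs $\{\pm\boldsymbol u,\pm\boldsymbol v\}$ subject only to the single $\mathrm{Harm}_2(\RR^2)$-condition, which forces $\boldsymbol u\perp\boldsymbol v$ and $w(\boldsymbol u)\|\boldsymbol u\|^{2}=w(\boldsymbol v)\|\boldsymbol v\|^{2}$. Starting from the square, keep $\boldsymbol u\perp\boldsymbol v$, $\|\boldsymbol u\|=1$, and put $\|\boldsymbol v\|=1+s$, $w(\boldsymbol v)=(1+s)^{-2}$; for small $s\neq0$ this is an antipodal Euclidean $3$-design $\varepsilon$-close to the original in which $\boldsymbol u,\boldsymbol v$ sit on different circles. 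For (1): I use the characterisation from the Introduction that an $(n+1)$-point set in $\RR^n$ is a Euclidean $2$-design iff it is a $1$-inner-product set with negative value. Encode the configuration by its Gram matrix $G$ (off-diagonal entries all equal to some $\alpha<0$, diagonal entries $d_i=\|\boldsymbol x_i\|^2$); realisability in $\RR^n$ is $\det G=0$, equivalently $\sum_i(d_i-\alpha)^{-1}=-1/\alpha$, and $G\succeq0$ holds near the regular simplex by continuity of eigenvalues. At the simplex $d_i\equiv1$, $\alpha=-1/n$; since $\partial_\alpha\!\big(\sum_i(d_i-\alpha)^{-1}+\alpha^{-1}\big)\neq0$ there, the implicit function theorem lets me perturb the $d_i$ off the all-equal locus and solve for $\alpha$, producing Gram matrices arbitrarily near $G$ with $d_1\neq d_2$. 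Realising these and pulling back the weight (unique up to scale, hence continuous in the configuration after normalisation, hence close to the constant weight) finishes the case.

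Cases (3) and (4) are the substantial ones. Step one: show, by a case analysis against the moment conditions of Theorem \ref{TheoN-S} together with Proposition \ref{Prop2.4} (writing points of $\RR^2$ as $r_ke^{i\theta_k}$), that a tight Euclidean $4$-design on two circles must be the $3+3$ configuration of Example \ref{Ex3.4} and a two-circle antipodal tight Euclidean $5$-design the $4+4$ configuration of Example \ref{Ex3.5}; each of these forms only a one-parameter family. Step two: enlarge the class to the reflection symmetry these examples already possess. A reflection-symmetric $6$-point set is two axial points together with two conjugate pairs, on up to four concentric circles; an antipodal reflection-symmetric $8$-point set is an axial pair, a perpendicular-axial pair, and a conjugate quadruple, on up to three circles. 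Under reflection symmetry all moment sums are real, so the design equations reduce to $6$ real equations in $10$ real parameters (resp.\ $3$ in $7$): positions, angles, and one weight per circle. Step three, the crux: verify that at the parameter point of Example \ref{Ex3.4} (resp.\ \ref{Ex3.5}) the Jacobian of these equations has full rank — so the solution set is a smooth $4$-manifold — and that the functional ``$\|\text{axial point}\|-\|\text{its cocircular conjugate pair}\|$'' (resp.\ ``$\|\text{axial pair}\|-\|\text{perpendicular-axial pair}\|$'') does not lie in the row space of that Jacobian. This last fact forces the ``stays cocircular'' locus to be a proper submanifold, so there are designs in the family arbitrarily near the example in which those two points become non-cocircular — strong non-rigidity. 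The weight changes stay small because on the tight locus the weights are determined by the point positions (Proposition \ref{Prop2.6}) and vary continuously.

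The main obstacle is precisely Step three of (3) and (4): computing the Jacobian of the real moment equations at a very symmetric base point and checking both that it is of maximal rank and that the ``splitting'' functional is independent of its rows — the high symmetry of the base point is exactly what makes a degeneracy conceivable a priori. For (4) this is short: at Example \ref{Ex3.5} the three gradients are explicitly computable, are independent, and the functional $d\|\text{axial pair}\|-d\|\text{perpendicular-axial pair}\|$ is not a combination of them whenever $r\neq1$; case (3) is the same in spirit with a $6\times10$ matrix. If one prefers to avoid the implicit function theorem, the alternative is to write down a one-parameter family of reflection-symmetric designs explicitly and verify the six (resp.\ three) real equations directly; either way, this elementary but lengthy computation is the real content.
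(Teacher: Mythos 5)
Your proposal is correct in substance and rests on the same underlying tool as the paper --- the implicit function theorem applied to the moment equations of Theorem \ref{TheoN-S} at the known tight examples, with a Jacobian rank check --- but it extracts the sphere-splitting differently and replaces the paper's case (1) argument entirely. The paper's key device, used in cases (1), (3) and (4), is to put the \emph{weight} variables into the free parameter set $I'$, perturb the weights to be pairwise distinct, and then invoke Proposition \ref{Prop2.4} (a tight design has constant weight on each supporting sphere) to force formerly cocircular points onto distinct spheres; you never use Proposition \ref{Prop2.4}, instead imposing directly that the ``splitting functional'' (difference of two radii) has nonzero differential on the kernel of the Jacobian. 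Your criterion is correct and buys a more geometric picture of the solution manifold, at the cost of an extra transversality computation; the paper's weight trick is slicker but needs the perturbed design to remain tight (automatic here, since $\dim(\mathrm{Pol}_e(S))$ is unchanged for the relevant $p$). For case (1) you substitute the Gram-matrix/$1$-inner-product characterisation (the $\det G=0$ condition $\sum_i(d_i-\alpha)^{-1}=-1/\alpha$) for the paper's explicit verification that a structured $\frac{n^2+3n-2}{2}\times\frac{n^2+3n-2}{2}$ submatrix $J'$ is regular; this is cleaner, and the paper itself concedes that its Section 5 classification would render its own computation unnecessary, keeping it only to illustrate the method. Your case (2) family coincides with the Bajnok--Bannai classification the paper quotes. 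Two remarks: your Step one for cases (3)--(4) (classifying two-circle tight $4$- and $5$-designs) duplicates results already available in \cite{B-B-2} and \cite{Et} and could simply be cited; and the decisive rank-and-transversality computations of your Step three are asserted rather than carried out --- though the paper is no more explicit for its cases (2)--(4), stating only the resulting ranks --- and the sample tangent-vector computation for Example \ref{Ex3.5} does come out as you claim (with the expected degeneration at $r=1$), so nothing is actually at risk.
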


Theorem \ref{Theo3.8} implies the following corollary.

\begin{corollary}\label{Col3.9}
There are infinitely many tight Euclidean designs of the following type:
\begin{itemize}
\item[(1)]  $2$-designs in $\RR^n$ supported by $p=2,3,\ldots,n+1$ concentric spheres, respectively.
\item[(2)]  Antipodal $3$-designs in $\RR^2$ supported by $2$ concentric spheres.
\item[(3)]  $4$-designs in $\RR^2$ supported by $3$ and $4$ concentric spheres.
\item[(4)]  Antipodal $5$-designs in $\RR^2$ supported by $3$ and $4$ concentric spheres.
\end{itemize}

\end{corollary}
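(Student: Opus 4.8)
The plan is to read the corollary off from Theorem \ref{Theo3.8}, the only genuine inputs being the dimension formulas of Proposition \ref{Prop2.5}. Fix one of the four types in Theorem \ref{Theo3.8} and let $(X,w)$, supported by $p_0\in\{1,2\}$ concentric spheres, be a representative. Strong non-rigidity produces, for every $\varepsilon>0$, a Euclidean $t$-design $(X',w')$ with $\|\boldsymbol x_i-\boldsymbol x_i'\|<\varepsilon$ that has two points, originally on a common sphere, now of distinct norm; hence $X'$ is supported by some $p'>p_0$ concentric spheres $S'$. Since $|X'|=|X|$ and, by Proposition \ref{Prop2.5}(3) (respectively (5)), $\dim(\mbox{Pol}_e(S'))=\dim(\mbox{Pol}_e(\RR^n))$ (respectively $\dim(\mbox{Pol}_e^*(S'))=\dim(\mbox{Pol}_e^*(\RR^n))$) for every $p'\ge\lfloor(e+\varepsilon_{S'})/2\rfloor+1$ (respectively $p'\ge\lfloor e/2\rfloor+1$), and these bounds hold in every case considered, the deformed $(X',w')$ is again a tight Euclidean $t$-design of the same strength, now supported by strictly more spheres.

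The next step is to check that each value of $p$ in the list is actually attained. Here one uses the explicit deformations constructed in the proof of Theorem \ref{Theo3.8}: their free parameters are the radii of the concentric spheres together with the weights on them, constrained only by the finitely many polynomial identities of Proposition \ref{Prop2.6} and the corresponding identities for antipodal $(2e+1)$-designs. By distributing the points appropriately one gets, for case (1), every $p$ with $2\le p\le n+1$ (splitting off points one at a time, or all at once), with $p\le n+1$ forced by $|X|=n+1$. For case (2) only $p=2$ occurs, since the four points of a deformed antipodal tight spherical $3$-design in $S^1$ form two antipodal pairs, each lying on a single sphere (for instance the family $\{(\pm1,0),(0,\pm r)\}$ with appropriately chosen weights). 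For cases (3) and (4) one obtains $p=3$ and $p=4$ by splitting one, respectively both, of the two spheres of Example \ref{Ex3.4}, respectively Example \ref{Ex3.5}, the cardinalities $|X|=6$, $|X|=8$ and the antipodality in (4) ruling out larger $p$ in the claimed range. One must confirm in each case that the splitting can be performed with all radii distinct and all weights strictly positive; this is immediate by continuity, since the weights of the undeformed design are positive.

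Finally, for \emph{infinitely many} designs with a fixed $p$, I would note that these deformations sweep out a continuous, positive-dimensional family of tight Euclidean $t$-designs no two of which are $O^*(n)$-equivalent. Indeed, the multiset $\{\|\boldsymbol x_i\|/\|\boldsymbol x_j\|:i\ne j\}$ is an invariant of the $O^*(n)$-orbit — orthogonal transformations and weight adjustments change no norm, and a scaling multiplies every norm by the same factor — and along the family it varies continuously and non-trivially (in Examples \ref{Ex3.4} and \ref{Ex3.5} it is controlled by the single parameter $r$). Hence the family meets infinitely many orbits, yielding infinitely many pairwise non-isomorphic tight Euclidean designs of the type in question; designs with different values of $p$ are a fortiori non-isomorphic, since $O^*(n)$ preserves the number of supporting spheres.

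The essential work is all in Theorem \ref{Theo3.8}; granting it, the corollary is bookkeeping, and the only point needing genuine care is the realizability of each prescribed $p$ with distinct radii and positive weights. To pin this down one returns to the deformation system used to prove Theorem \ref{Theo3.8} — the identities of Proposition \ref{Prop2.6} with $e=1$ for $n+1$ points on $p$ spheres in case (1), and the low-degree systems in $\RR^2$ for cases (3) and (4) — solves it with the splitting pattern giving exactly $p$ spheres, and checks that the solution set stays positive-dimensional after dividing out the $O^*(n)$-action.
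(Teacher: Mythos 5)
Your proposal is correct and follows the same overall strategy as the paper: the corollary is extracted from the implicit-function-theorem deformations constructed in the proof of Theorem \ref{Theo3.8}, with tightness of the deformed design preserved because the cardinality is unchanged and the dimension formulas of Proposition \ref{Prop2.5} still give $\dim(\mbox{Pol}_e(S'))=\dim(\mbox{Pol}_e(\RR^n))$ for the larger $p'$. The one substantive difference is the mechanism used to realize a prescribed number of spheres. You split the radii directly and appeal to continuity; the paper instead chooses the free variables $I'$ of the implicit function theorem to include the \emph{weights} (e.g.\ all of $w_1,\ldots,w_{n+1}$ in case (1), and $\{w_1,w_2,w_4,w_5\}$ in case (3)), prescribes which weights are equal and which are distinct, and then invokes Proposition \ref{Prop2.4} --- a tight design has constant weight on each supporting sphere --- to force points with distinct weights onto distinct spheres. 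That device turns the combinatorial bookkeeping of ``which points share a sphere'' into a choice of free parameters, which is cleaner than arguing about radii inside the implicitly defined solution; your version needs the extra (true, but unaddressed in detail) claim that the position coordinates among the free variables can be perturbed so as to split norms in the desired pattern. Two further small points: for case (2) the paper does not really deform at all but quotes the explicit classification $X=\{\pm r_1\boldsymbol e_1,\pm r_2\boldsymbol e_2\}$ with $w(\pm r_i\boldsymbol e_i)=\frac{1}{2r_i^2}$ from \cite{Et}, which is exactly the family you name; and for case (1) the paper's Section 5 classification (Theorem \ref{Theo4.1} and its corollary) independently produces, for every $p\le n+1$, a tight Euclidean $2$-design on exactly $p$ spheres, so that case does not ultimately rest on the deformation argument. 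Your norm-ratio multiset invariant for distinguishing $O^*(n)$-orbits is a useful explicit certificate that the paper leaves implicit.
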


Corollary \ref{Col3.9} says about the existence of quite plenty of tight Euclidean $t$-designs, contrary to the initial guess made by Neumaier and Seidel and also Delsarte and Seidel respectively in \cite{N-S} and \cite{D-S}.  We remark here that antipodal tight Euclidean $3$-designs in $\RR^n$ have been completely classified in \cite{Et}.

\section{Proof of Theorem \ref{Theo3.8}}
We will prove Theorem \ref{Theo3.8} using the implicit function theorem described below.

Let $X$ be a tight Euclidean $t$-design in $\RR^n.$  Let $|X|=N,~X
=\{\boldsymbol u_i,~1 \leq i \leq N \}$ and
$\boldsymbol u_i=(u_{i,1},u_{i,2},\ldots,u_{i,n})$ for $1 \leq i \leq N$.  Let $w_i$ be the weight of $\boldsymbol u_i$, for $1 \leq i \leq N$.  Then we consider $(u_{i,1},u_{i,2}, \ldots, u_{i,n},w_i,~1 \leq i \leq N)$ as a vector $\eta=(\eta_1,\eta_2,\ldots,\eta_{(n+1)N}) \in \RR^{(n+1)N}$ whose entries are given by $u_{i,1},u_{i,2},\ldots,u_{i,n},w(u_i),~\mbox{for }1 \leq i \leq N$.  Let $\xi=(\xi_1,\xi_2,\ldots,\xi_{(n+1)N}) \in \RR^{(n+1)N}$ be the vector variable whose entries are defined by $(x_{i,1},x_{i,2}, \ldots, x_{i,n},w_i),~1 \leq i \leq N).$  Then $\eta$ is a common zero point of a given set of polynomials $f_1(\xi),f_2(\xi),\ldots,f_K(\xi)$ in the vector variable $\xi$ (c.f. Theorem \ref{TheoN-S} (2)).  Let $I=\{i,~1 \leq i \leq (n+1)N \}$ and $I ^{\prime} \seq I$.  We denote by $J$ the Jacobian
of the system of equations and $J'$ be a
sub matrix of $J$ of size $K\times K$:
\[
J=\left(\frac{\partial f_i}{\partial \xi_k}\right) _{1\leq i\leq K,\atop k \in I},\quad
J^{\prime}=\left(\frac{\partial f_i}{\partial \xi_k}\right) _{1\leq i\leq K,\atop k \in I \backslash I^{\prime}}.
\]
Assume $|I\backslash I^{\prime}|=K$ and that $\mbox{rank}(J^{\prime})=K$ holds at $\eta$, i.e.,
$J$ is of full rank at $\eta$.  We may assume $I \backslash I^{\prime}=\{1,2,\ldots,K\}$ by reordering the components of the vectors $\xi$ and $\eta$.  Let $\xi^{\prime}=(\xi_i,~i \in I^{\prime})$ and $\eta^{\prime}=(\eta_i,~i \in I^{\prime}).$  Then the implicit function theorem tells us that there exist unique continuously differentiable function $\Psi(\xi^{\prime})=(\psi_i(\xi^{\prime}),~i \in I \backslash I^{\prime})$ satisfying the following conditions:
\begin{itemize}
\item[(1)]  For any $1 \leq j \leq K,$
\[
f_j(\psi_1(\xi^{\prime}),\psi_2(\xi^{\prime}),\ldots,\psi_K(\xi^{\prime}),\xi^{\prime})=0
\]
holds in some small neighborhood of $\eta^{\prime}$.
\item[(2)]  $\psi_i(\eta^{\prime})=\eta_i,$ for any $1 \leq i \leq K$.
\end{itemize}

Let $\xi_i=\psi_i(\xi^{\prime}),$ for $1 \leq i \leq K$.  Then for any $\xi^{\prime}$ in a small neighborhood of $\eta^{\prime},$ $X^{\prime}=\{\xi_i,~i \in I \}$ is a Euclidean $t$-design.  Since $\psi_i(\xi^{\prime}),~1 \leq i \leq K,$ are continuous function of $\xi^{\prime},$ we can make $|\xi_i-\eta_i|< \varepsilon$ for any given positive real number $\varepsilon.$  For example, if $X$ is a tight Euclidean $2e$-design and $I^{\prime}$ contains all the indices corresponding to the variables $w_1,w_2,\ldots,w_N,$ then we can make every point in $X^{\prime}$ having distinct weight values.  Since, by Proposition \ref{Prop2.4}, a tight Euclidean $2e$-design $X^{\prime}$ must have constant weight on each sphere which support $X^{\prime},$ every point of $X^{\prime}$ must be on the different spheres.

In the following we apply this method to the tight spherical $2$-designs on $S^{n-1}$, tight spherical $3$-designs on $S^1,$ tight Euclidean $4$-designs in $\RR^2$ (Example \ref{Ex3.4}) and antipodal tight Euclidean $5$-designs in $\RR^2$ (Example \ref{Ex3.5}).\\

\noindent
{\bf  (1) Tight spherical 2-designs on $S^{n-1}$.}\\
A tight spherical $2$-design on $S^{n-1}$ is tight as
Euclidean $2$-design since $\dim(\mbox{Pol}_1(S^1))
=\dim(\mbox{Pol}_1(\RR^2))$ holds.
In the following section we will give the classification of all the
Euclidean tight $2$-designs in $\mathbb R^n$. However, since the
concept of rigidity or strong non rigidity
is very important, and also we would like to show how we applied
the implicit function theorem, we will prove that tight spherical 2-designs are
strong non rigid as Euclidean 2-designs.

Tight spherical 2-designs on $S^{n-1}$ are classified and
isometric to the regular simplex on $S^{n-1}$.
We can express the regular simplex with the following $n$ unit vectors $\boldsymbol  u_i=(u_{i,1},~\ldots,~u_{i,n})$, $i=1,~\ldots,~ n$, and
$\boldsymbol  u_{n+1}=\frac{1}{\sqrt{n}}(1,1,~\ldots, ~1)$ in $S^{n-1}$, where
\[
u_{i,j}=\left\{
\begin{array}{ll}b&\mbox{for $1\leq i\leq n$
and $j\neq i$},\\
a&\mbox{for $1\leq i\leq n$
and $j= i$},
\end{array}
\right.
\]

$a=-\frac{1+(n-1)\sqrt{n+1}}{n\sqrt{n}}$
and $b=\frac{-1+\sqrt{n+1}}{n\sqrt{n}}$.

\indent
Recall the explicit basis for $\mbox{Harm}_s(\RR^n).$
Let $\boldsymbol x=(x_1,\ldots x_n)\in \RR^n$.
Let
\begin{align*}\Phi_1\quad &
=\{f_k(\boldsymbol x)=x_k,\quad 1\leq k\leq n\},\\
\Phi_2\quad &=\{g_{1,k}(\boldsymbol x)=x_1x_k ,\quad 2\leq k\leq n\},\\
&\vdots\\
\Phi_i\quad &=\{g_{i-1,k}(\boldsymbol x)=x_{i-1}x_k ,\quad
i\leq k\leq n\},\\
&\vdots\\
\Phi_n\quad &=\{g_{n-1,n}(\boldsymbol x)=x_{n-1}x_n \},\\
\Phi_{n+1}&=\{h_k(\boldsymbol x)={x_1}^2-{x_k}^2,\quad
2\leq k \leq n \}.
\end{align*}
Then $\Phi_1$ is a basis of
$\mbox{Harm}_1(\RR^n)$ and
$\cup_{i=2}^{n+1}\Phi_i$
is a basis of $\mbox{Harm}_2(\RR^n)$. Note that
$\dim (\mbox{Harm}_1(\RR^n))+\dim (\mbox{Harm}_2(\RR^n))=\frac{n^2+3n-2}{2}$.

Let $\Phi=\cup_{i=1}^{n+1}\Phi_i$ and $I=\{x_{i,1},~\ldots,~x_{i,n},1\leq i\leq n+1\}\cup\{w_1,\ldots,w_{n+1}\}$, i.e., be the set of $(n+1)^2$ variables.

Theorem \ref{TheoN-S} implies that an $(n+1)$-point set $\{
\boldsymbol v_i=(v_{i,1},~\ldots,~v_{i,n}), 1\leq i\leq n+1\}$
is a Euclidean design if and only if
$\{v_{i,1},~\ldots,~v_{i,n},1\leq i\leq n+1\}\cup\{w_1,\ldots,w_{n+1}\}$ is a
solution of the following system of $\frac{n^2+3n-2}{2}$ equations in $(n+1)^2$ variables in $I$.
$$
\sum_{\lambda=1}^{n+1}w_\lambda \varphi(x_{\lambda,1},~\ldots,~x_{\lambda,n})=0,\quad \mbox{for all}\ \varphi\in \Phi.$$
Let $I_i=\{x_{j,i}\ |\ i\leq j\leq n\}$, for $i=1, 2,\ldots, n-1$ and $I_n=\{x_{1,n}, x_{2,n},\ldots,x_{n,n}\}$.
Then $\sum_{i=1}^n|I_i|=n+(n-1)+\cdots+2+n=\frac{n^2+3n-2}{2}$ holds.
Let $\boldsymbol x_{\lambda}=(x_{\lambda,1}, \ldots, x_{\lambda,n})$.
In the following we will prove that
the Jacobian $J$ of our system of
$\frac{n^2+3n-2}{2}$ equations in $(n+1)^2$
variables is of the full rank at $\boldsymbol x=\boldsymbol u, \ 1 \leq \lambda \leq n+1$ and $w_1=\cdots=w_{n+1}=1$. Actually we prove
that the sub matrix
$$J'=\left(\frac{\partial}{\partial x}
\left(\sum_{\lambda=1}^{n+1}w_\lambda \varphi(x_{\lambda,1},~\ldots,~x_{\lambda,n})\right)\right)_{\varphi\in \Phi, x\in \cup_{j=1}^{n} I_j}$$
is a regular matrix of size $\frac{n^2+3n-2}{2}$. In this case $I'(=I\backslash\cup_{j=1}^nI_j)=\{x_{i,j}\ |\ 1\leq i<j\leq n-1\}\cup\{x_{n+1,j}\ |\ 1\leq j\leq n\}\cup \{w_i\ |\ 1\leq i\leq n+1 \}$. If we prove this, for any $w_1,\ldots,w_{n+1}$ and $x_{i,j}$, $1\leq i<j\leq n-1$, in
a small neighborhood of
$w_1=1,\ldots,w_{n+1}=1$, $x_{i,j}=b$, $1\leq i<j\leq n$, $x_{n+1,j}=\frac{1}{\sqrt{n}}$, $1\leq j\leq n$,  we obtain a tight Euclidean
$2$-design in $\mathbb R^n$. Since, by Proposition \ref{Prop2.4}, a tight Euclidean $2$-design must have a constant weight on each sphere, this implies the existence of many non-isomorphic tight Euclidean $2$-designs in $\RR^n$ supported  by $p=2,3, \ldots,n+1$ concentric spheres, respectively.\\

\noindent
{\bf Proof }\\
In the following we use same symbol
$J$, $J'$ and $J_j$ for the same matrices
evaluated at $\boldsymbol x_{\lambda}=\boldsymbol u_{\lambda}$
and $w_1=\cdots=w_{n+1}=1$.
Let
$$J_j=\left(\frac{\partial}{\partial x_{i,j}}
\left(\sum_{\lambda=1}^{n+1}w_\lambda \varphi(x_{\lambda,1},~\ldots,~x_{\lambda,n})\right)\right)_{\varphi\in \Phi, x_{i,j}\in I_j}$$
for $1\leq i\leq n$.
Then $J'=[J_1,J_2,\ldots,J_n].$
For example, If $n=4$, we have
$$J'=\left[\begin{array}{ccccccccccccc}
1&1&1&1&0&0&0&0&0&0&0&0&0\\
0&0&0&0&1&1&1&0&0&0&0&0&0\\
0&0&0&0&0&0&0&1&1&0&0&0&0\\
0&0&0&0&0&0&0&0&0&1&1&1&1\\
b&a&b&b&b&b&b&0&0&0&0&0&0\\
b&b&a&b&0&0&0&b&b&0&0&0&0\\
b&b&b&a&0&0&0&0&0&a&b&b&b\\
0&0&0&0&b&a&b&b&b&0&0&0&0\\
0&0&0&0&b&b&a&0&0&b&a&b&b\\
0&0&0&0&0&0&0&b&a&b&b&a&b\\
2a&2b&2b&2b&-2a&-2b&-2b&0&0&0&0&0&0\\
2a&2b&2b&2b&0&0&0&-2a&-2b&0&0&0&0\\
2a&2b&2b&2b&0&0&0&0&0&-2b&-2b&-2b&-2a
\end{array}
\right]$$
In general, each $J_j$ are of the following shape.
Let
$$J_j=\left[\begin{array}{c}
A_{1,j}\\
\vdots\\
A_{n+1,j}
\end{array}\right],$$
where
$A_{i,j}$ ($1\leq j\leq n-1$, $1\leq i\leq n$) is an $(n-i+1)\times (n-j+1)$,
$A_{n+1,j}$ ($1\leq j\leq n-1$) is an $(n-1)\times (n-j+1)$ matrix, $A_{i,n}$
($1\leq i\leq n$) is an  $(n-i+1)\times n$ matrix and $A_{n+1,n}$ is an $(n-1)\times n$ matrix.
Each $A_{i,j}$ ($1\leq i\leq n+1$, $1\leq j\leq n$) is
defined by the following way:\\
\begin{itemize}
\item The $j$-th row vector of $A_{1,j}$, $(1\leq j\leq n)$, is $(1,1,\ldots,1)$ and all the other rows are 0.

\item The $(j-i+1)$-th row vector of
$A_{i,j}$ $(2\leq i\leq j\leq n-1)$ is
$(b,b,\ldots,b)$ and all the other vectors are zero.
\item $A_{j+1,j}$ $(1\leq j\leq n-1)$ is a
matrix whose $(k,k+1)$-th entry is $a$
for $k=1,\ldots, n-j$ and all the other
entries are $b$.
\item $A_{i,j}$,
$(1\leq j\leq n-2,\ j+2\leq i\leq n)$,
is a zero matrix.
\item Every row vector of $A_{n+1,1}$ is $(2a,2b,2b,\ldots,2b)$.
\item The $(j-1)$-th row vector of $A_{n+1,j}$
$(2\leq j\leq n-1)$ is $(-2a,-2b,-2b,\ldots,-2b)$ and all the other row vectors are 0.
\item The $(n-i+1)$-th
row vector of $A_{i,n}$ $(2\leq i\leq n)$ is
$(b,\ldots,b,a,b,\ldots,b)$, where $a$ is the $(i-1)$-th entry, and all the other row vectors are zero.
\item The $(n-1)$-th row vector of $A_{n+1,n}$
is $(-2b,-2b,\ldots,-2a)$ and all the other row vectors are zero.
\end{itemize}
Then it is easy to see that the rank of $J_j$
equals $n-j+1$ for $j=1,\ldots, n-1$ and
rank of $J_n$ is $n$. Let $W_j$ be the subspace of $\mathbb R^{\frac{n^2+3n-2}{2}}$ spanned by
the column vectors of $J_j$, then it is easy to
see that $W_j\cap W_l=\{0\}$ for any $j\not=l$.
Hence the rank of  $J'$ equals
$\frac{n^2+3n-2}{2}$ and $J$ has the full rank.\qed\\

\noindent
{\bf (2) Tight spherical 3-designs on $S^{1}.$} \\
It is known that every tight spherical $(2e+1)$-design is antipodal.  Furthermore, since $\dim(\mbox{Pol}_1(S^1))=\dim(\mbox{Pol}_1(\RR^2))$, every tight spherical $3$-design is an antipodal tight Euclidean $3$-design.

Recently, Bajnok \cite{Baj} constructed antipodal tight Euclidean $3$-designs in $\RR^2$ supported by $p=1,2$ concentric spheres as follow.  For $1 \leq p \leq 2,$ set $m=6-2p$ and
\[
X=\left\{b_{kj}=\left(r_k \cos\left(\frac{2j+k}{m}\pi\right),r_k \sin\left(\frac{2j+k}{m}\pi \right)\right),~1 \leq j \leq m,~1 \leq k \leq p \right\}.
\]
The weight function is given by $\displaystyle w(b_{kj})=\frac{1}{r_k^2},$ for $k=1,2$.

Later, Bannai \cite{Et} gave the complete classification of Euclidean designs of this type in $\RR^n:$
\[
X=\left\{\pm r_i \boldsymbol e_i, 1\leq i\leq n \right \}
\quad \mbox{and}
\quad
w(\pm r_i \boldsymbol e_i)=\frac{1}{n{r_i}^2}\quad
\mbox{for $1\leq i\leq n$,}\]
where $r_1,\ldots, r_n$ are any
positive real numbers.

From those results, we conclude that tight spherical $3$-designs are strongly non-rigid as Euclidean designs.

We can also show the strong non-rigidity of a tight spherical $3$-design $X=\{\pm(1,0),
\pm(0,1) \}$ on $S^1$ using the implicit function theorem.
Let $\boldsymbol u_1=(1,0), \boldsymbol u_3=(0,1)$,
$\boldsymbol u_2=-\boldsymbol u_1$, and $\boldsymbol u_4=-\boldsymbol u_3$.
As for the harmonic polynimials in $\mbox{Pol}(\RR^2)$ we use the following notation. Let $\boldsymbol x=(x_1,x_2)\in \RR^2$. We define
\begin{align*}&
\varphi_1(\boldsymbol x)=x_1,\quad \varphi_2(\boldsymbol x)=x_2,\quad \varphi_3(\boldsymbol x)=x_1x_2, \quad \varphi_4(\boldsymbol x)={x_1}^2-{x_2}^2,\\
&\varphi_5(\boldsymbol x)=3{x_1}^2x_2-{x_2}^3,
\quad
\varphi_6(\boldsymbol x)={x_1}^3-3x_1{x_2}^2,\\
&\varphi_7(\boldsymbol x)={x_1}^3x_2-x_1{x_2}^3,
\quad \varphi_8(\boldsymbol x)={x_1}^4-6{x_1}^2{x_2}^2+{x_2}^4,\\
&\varphi_9(\boldsymbol x)
={x_1}^5-10{x_1}^3{x_2}^2+5x_1{x_2}^4,\quad
\varphi_{10}(\boldsymbol x)
=5{x_1}^4{x_2}-10{x_1}^2{x_2}^3+{x_2}^5.
\end{align*}
Then we have
\begin{align*}
&\mbox{Harm}_1(\RR^n)=\langle\varphi_1,\varphi_2\rangle,\quad
\mbox{Harm}_2(\RR^n)=\langle\varphi_3,\varphi_4\rangle,\quad \mbox{Harm}_3(\RR^n)=\langle\varphi_5,\varphi_6\rangle,\\
&\mbox{Harm}_4(\RR^n)=\langle\varphi_7,\varphi_8\rangle,\quad
\mbox{Harm}_5(\RR^n)=\langle\varphi_9,\varphi_{10}\rangle.
\end{align*}
Let $\boldsymbol x_\lambda=(x_{\lambda,1},x_{\lambda ,2})
\in \RR^n$ for
$\lambda=1,2,3,4$.
Let us consider the following 8 polynomial functions in 12 variables $\{x_{\lambda,1},
x_{\lambda,2},
w_\lambda, \ 1\leq \lambda\leq 4\}$:
\begin{align*}
&f_i=\sum_{\lambda=1}^4w_\lambda
\varphi_i(\boldsymbol x_\lambda),\quad 1\leq i\leq 6,\\
&f_{6+i}=\sum_{\lambda=1}^4w_\lambda\|\boldsymbol x_\lambda\|^2
\varphi_i(\boldsymbol x_\lambda),\quad 1
\leq i\leq 2.
\end{align*}
Then $\{\boldsymbol x_\lambda,i=1,\ldots,4 \}$ is a Euclidean 3-design with weight $w(\boldsymbol x_\lambda)= w_\lambda$ if and only if $f_i=0$ holds for $i=1,\ldots, 8$.
For our purpose, we construct the Jacobian $J^\prime$ with rows and columns indexed by the polynomial of even degree, i.e., $f_3$ and $f_4$, and the antipodal half of $X$, i.e.,
$x_{1,1},x_{1,2},x_{3,1},x_{3,2},w_1,w_3,$ respectively.  If we take $I^\prime=\{x_{1,1},x_{1,2},x_{3,1},x_{3,2}\}$, the Jacobian $J^\prime$
at the given solution,
$\boldsymbol x_\lambda=\boldsymbol u_\lambda, w_\lambda=1,1\leq\lambda\leq 4$,
is of full rank, i.e., rank({$J^\prime$})=2, at $X(r)$.  It means that we can move the points $\{(1,0), (0,1)\}$ (and simultaneously their antipodal pair $\{(-1,0),(0,-1)\}$) slightly and freely such that they sit on two different concentric spheres.\\

\noindent
{\bf (3) Tight 4-designs in $\RR^2.$}\\
We have known that the tight Euclidean $4$-designs $X(r)$ in $\RR^2$ constructed by Bannai and Bannai \cite{B-B-2} are non-rigid (see Example \ref{Ex3.4}).
Let $\boldsymbol u_1=(1,0),\
\boldsymbol u_2=(-\frac{1}{2},\frac{\sqrt{3}}{2}),\
\boldsymbol u_3=(-\frac{1}{2},-\frac{\sqrt{3}}{2})$,
$\boldsymbol u_4=(-r,0),\
\boldsymbol u_5=(\frac{1}{2}r,\frac{\sqrt{3}}{2}r),\
\boldsymbol u_6=(\frac{1}{2}r,-\frac{\sqrt{3}}{2}r)$. Then $w(\boldsymbol u_\lambda)=1$ for $\lambda=1,2,3$, and
$w(\boldsymbol u_\lambda)=\frac{1}{r^3}$ for $\lambda=4,5,6$.

Let $\boldsymbol x_\lambda=(x_{\lambda,1},x_{\lambda,2}), 1\leq \lambda \leq 6$.
We check the strong non-rigidity of the designs using the implicit function theorem with the following 12 polynomial functions in 18 variables $\{x_{\lambda,1},
x_{\lambda,2},
w_\lambda, \ 1\leq \lambda\leq 6\}$:
\begin{align*}&f_i=\sum_{\lambda=1}^6w_\lambda
\varphi_i(\boldsymbol x_\lambda),\quad 1\leq i\leq 8\\
&f_{8+i}=\sum_{\lambda=1}^6w_\lambda\|\boldsymbol x_\lambda\|^2
\varphi_i(\boldsymbol x_\lambda),\quad 1
\leq i\leq 4.
\end{align*}
Then $\{\boldsymbol x_\lambda,\ 1\leq \lambda\leq 6\}$ is a Euclidean 4-design
with weight $w(\boldsymbol x_\lambda)=w_\lambda$ if and only if
$f_i=0$ holds for $i=1,\ldots,12$.

If we  take, say, $I^{\prime}=\{w_1,w_2,w_4,w_5\}$  we get that the Jacobian $J^{\prime}$ is of full rank at $X(r)$ (i.e., rank$(J^{\prime})=12$).  Therefore, we can move $w_1,w_2,w_4,w_5$ slightly and freely in such a way that some two of them have the same value, or even all $w_i$'s are distinct to each other.  This implies the existence of many non-isomorphic tight 4-designs with $p=3 \mbox{ and }4$ in $\RR^2$.\\

\noindent
{\bf (4) Antipodal tight 5-designs in $\RR^2.$} \\
We have also known that the antipodal tight Euclidean $5$-designs $X(r)$ in $\RR^2$ constructed by Bannai \cite{Et} are non-rigid (see Example \ref{Ex3.5}).  Here we define
$\boldsymbol u_1=(1,0),\
\boldsymbol u_2=-\boldsymbol u_1,\
\boldsymbol u_3=(0,1),\
\boldsymbol u_4=-\boldsymbol u_3,\
\boldsymbol u_5
=(\frac{r}{\sqrt{2}},\frac{r}{\sqrt{2}}),\
\boldsymbol u_6=-\boldsymbol u_5,\
\boldsymbol u_7
=(\frac{r}{\sqrt{2}},-\frac{r}{\sqrt{2}}),\
\boldsymbol u_8
=-\boldsymbol u_7$.
Then $w(\boldsymbol u_\lambda)=1$ for
$\lambda=1,\ldots, 4$ and $w(\boldsymbol u_\lambda)=\frac{1}{r^4}$ for
$\lambda=5,\ldots, 8$.
Again we use the implicit function theorem.
Let $\boldsymbol x_\lambda
=(x_{\lambda,1},x_{\lambda,2})$ for $\lambda=1,\ldots, 8$. We use the
following 18 polynomial functions in 24 variables $\{x_{\lambda,1},x_{\lambda,2},w_\lambda, 1\leq \lambda\leq 8\}$ to check the strong non-rigidity of the designs.
\begin{align*}
f_i &=\sum_{\lambda=1}^8
w_\lambda\varphi_i(
\boldsymbol x_\lambda), \mbox{for}\ i=1,\ldots,10,
\\
f_{10+i} &=\sum_{\lambda=1}^8
w_\lambda\|\boldsymbol x_\lambda\|^2\varphi_i(\boldsymbol x_\lambda),\ \mbox{for}\ i=1,\ldots 6,\\
f_{16+i} &=\sum_{\lambda=1}^8
w_\lambda\|\boldsymbol x_\lambda\|^4\varphi_i(\boldsymbol x_\lambda),\ \mbox{for}\ i=1,\ 2.
\end{align*}
Then $\{\boldsymbol x_\lambda,1\leq \lambda\leq 8\}$ is a Euclidean 5-design with weight $w(\boldsymbol x_\lambda)=w_\lambda$
if and only if $f_i=0$ holds for $i=1,\ldots, 18$.
Analogue to the case of $3$-designs mentioned above, after neglecting the function $f_k$ of odd degree, we can take, for instance, $I^\prime=\{x_{1,1},x_{1,2},x_{3,1},x_{3,2}\}$ to get the Jacobian $J^\prime$ of full rank, rank$(J^\prime)=6$, at $X(r)$.  It means that we can move the points $\{(1,0),(0,1)\}$ (and simultaneously their antipodal pair $\{(-1,0),(0,-1)\}$) slightly and freely such that they sit on two different concentric spheres.  This implies that the antipodal $5$-designs in $\RR^2$ is strongly non-rigid.  Moreover, the Jacobian is again of full rank if we take $I^\prime=\{w_1,w_3,w_5,w_7\}$.  By the same reason with the $4$-designs case above, this implies the existence of many non-isomorphic antipodal tight Euclidean $5$-designs in $\RR^2$ supported by $3\mbox{ and }4$ concentric spheres, respectively.
\\

\begin{remark}
We have also investigated the possibility of existence of non-antipodal Euclidean $3$- and $5$-design in $\RR^2$ with cardinalities 4 and 8 respectively.  Using the implicit function theorem, we consider the Jacobian whose rows are indexed by eight and eighteen functions mentioned above for the case of $3$- and $5$-design, respectively. In the case of $5$-design, for getting the Jacobian $J^\prime$ of full rank, i.e., rank$(J^\prime)=18$, at $X(r)$ there exist several choices of $I^\prime, |I^\prime|=4,$ consisting of two points sitting on the same sphere.  But none of them are antipodal pair.  In the case of $3$-design, the result is "worst": there is no any choice of such an $I^\prime$ which gives the corresponding Jacobian of full rank.  This implies that the implicit function theorem doesn't work to show the existence of non-antipodal Euclidean $3$- and $5$-design in $\RR^2$, respectively.
\end{remark}

\section{Tight Euclidean $2$-designs in $\RR^n$}\label{Sec6}
In the previous section we have shown that tight spherical $2$-designs in $\RR^n$ are strongly non-rigid and hence there exist infinitely many (non-isomorphic) tight Euclidean $2$-designs in $\RR^n$ supported by $2,3,\ldots,n+1$ concentric spheres, respectively.  The aim of this section is to give the complete classification of tight Euclidean $2$-designs in $\RR^n$.

By Proposition \ref{Prop2.6} (2) and the fact that in $\RR^n$ the Gegenbauer polynomial of degree $1$ satisfies
\[
Q_1(y)=ny,
\]
we obtain
\[
\langle \boldsymbol u,
\boldsymbol v \rangle =-\frac{a_1}{na_0},~\mbox{for any distinct vectors }\boldsymbol u,\boldsymbol v \in X.
\]
Therefore every tight Euclidean $2$-design $X$ is a 1-inner product set with negative inner product value $\displaystyle -\frac{a_1}{na_0}.$  In general, a subset $X \seq \RR^n$ is called $e$-inner product set if
\[
|\{\langle \boldsymbol x,\boldsymbol y \rangle,~\boldsymbol x,\boldsymbol y \in X,~\boldsymbol x \neq \boldsymbol y \}|=e
\]
holds.  The cardinality of $e$-inner product set in $\RR^n$ is known to be bounded from above by $\displaystyle \left( n+e \atop e \right)$ (see \cite{D-F}).  In particular, a $1$-inner product set is bounded above by $n+1$ which is attained by regular simplices which is also tight spherical $2$-designs and tight Euclidean $2$-designs at the same time.

For any positive real numbers $R_1,R_2, \ldots,R_n$, we define a function $f_k$ of $k$ variables $R_1,R_2,\ldots,R_k$ by the recurrence relation as follows:
\begin{equation}\label{Eq4.1}
\left\{
\begin{array}{rcl}
f_0&=&1\\
f_1&=&R_1,\\
\displaystyle f_k&=&f_{k-1}(R_k+1)-\prod_{i=1}^{k-1}(R_i+1), \mbox{ for } 2 \leq k \leq n.
\end{array}
\right.
\end{equation}

Then we have the following theorem.

\begin{theorem}\label{Theo4.1}
Let $X=\{\boldsymbol x_k,~1 \leq k \leq n+1 \}$ be an $(n+1)$-subset in $\RR^n.$  Let also $R_k=\|\boldsymbol x_k\|^2,$ for $1 \leq k \leq n+1$.  If $X$ is a $1$-inner product set satisfying
\begin{equation}\label{Eq4.2}
\langle \boldsymbol x, \boldsymbol y \rangle = -1, \mbox{ for any distinct }\boldsymbol x,\ \boldsymbol y \in X,
\end{equation}
then the following three conditions hold:
\begin{itemize}
\item[(1)] $f_k > 0, \mbox{ for } 1 \leq k \leq n,$
\item[(2)] $f_n<\prod_{i=1}^n
(R_i+1)$,
\item[(3)] $\displaystyle R_{n+1}
=\frac{\prod_{i=1}^n(R_i+1)}{f_n}-1$.
\end{itemize}
Conversely, if the conditions (1), (2), and (3) hold, then there exists $1$-inner product set $X=\{\boldsymbol x_k,~1 \leq k \leq n+1\} \seq \RR^n$ satisfying $\|\boldsymbol x_k\|^2=R_k$ for
$k=1,\ldots, n+1$ and the condition (\ref{Eq4.2}).
\end{theorem}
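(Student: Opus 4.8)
The whole statement is governed by the $(n+1)\times(n+1)$ Gram matrix of $X$, and the plan is to convert the three conditions into statements about its principal minors. Write $G=\big(\langle\boldsymbol x_i,\boldsymbol x_j\rangle\big)_{1\le i,j\le n+1}$; under~(\ref{Eq4.2}) we have $G_{ii}=R_i$ and $G_{ij}=-1$ for $i\ne j$, so $G=D-\boldsymbol 1\boldsymbol 1^{\top}$, where $D=\mathrm{diag}(R_1+1,\dots,R_{n+1}+1)$ and $\boldsymbol 1=(1,\dots,1)^{\top}$. For $1\le k\le n+1$ let $G_k$ be the leading $k\times k$ principal submatrix of $G$ (the Gram matrix of $\boldsymbol x_1,\dots,\boldsymbol x_k$) and $D_k=\mathrm{diag}(R_1+1,\dots,R_k+1)$. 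By the matrix determinant lemma,
\[
\det G_k=\det(D_k)\Big(1-\boldsymbol 1^{\top}D_k^{-1}\boldsymbol 1\Big)=\Big(\prod_{i=1}^{k}(R_i+1)\Big)\Big(1-\sum_{i=1}^{k}\frac{1}{R_i+1}\Big),
\]
and the first step is to observe that the right-hand side satisfies $f_0=1$, $f_1=R_1$ and the recurrence~(\ref{Eq4.1}), so that $\det G_k=f_k$ for $0\le k\le n$.

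For the forward implication, first note $R_i=\|\boldsymbol x_i\|^2>0$ for all $i$, since $\langle\boldsymbol x_i,\boldsymbol x_j\rangle=-1\ne0$ forces $\boldsymbol x_i\ne\boldsymbol 0$. Applying the displayed formula with $k=n+1$: as $G$ is a Gram matrix of $n+1$ vectors in $\RR^n$ it is positive semi-definite of rank at most $n$, so $\det G=0$, and since $\prod_{i=1}^{n+1}(R_i+1)>0$ this gives $\sum_{i=1}^{n+1}\frac{1}{R_i+1}=1$. For~(1), each $G_k$ with $1\le k\le n$ is a principal submatrix of the positive semi-definite matrix $G$, hence positive semi-definite, so $f_k=\det G_k\ge0$; if $f_k=0$ then $\boldsymbol x_1,\dots,\boldsymbol x_k$ are linearly dependent, say $\sum_{i=1}^k c_i\boldsymbol x_i=0$ with the $c_i$ not all zero, and pairing with $\boldsymbol x_j$ ($1\le j\le k$) gives $c_j(R_j+1)=\sum_{i=1}^k c_i=:S$; here $S\ne0$ (else every $c_j=0$), so $c_j=S/(R_j+1)$ and summing yields $\sum_{i=1}^k\frac{1}{R_i+1}=1$, contradicting $\sum_{i=1}^{n+1}\frac{1}{R_i+1}=1$ because the omitted summands are positive. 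Hence $f_k>0$ for $1\le k\le n$. Finally, rearranging and using the formula for $f_n$,
\[
\frac{1}{R_{n+1}+1}=1-\sum_{i=1}^{n}\frac{1}{R_i+1}=\frac{f_n}{\prod_{i=1}^{n}(R_i+1)},
\]
so $R_{n+1}+1=\big(\prod_{i=1}^{n}(R_i+1)\big)/f_n$, which is~(3); and since $R_{n+1}>0$ and $f_n>0$ this forces $\prod_{i=1}^{n}(R_i+1)>f_n$, which is~(2).

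For the converse, suppose $R_1,\dots,R_{n+1}$ are positive reals satisfying~(1)--(3). Rewriting~(3) as $\frac{1}{R_{n+1}+1}=f_n/\prod_{i=1}^{n}(R_i+1)$ and invoking the formula for $f_n$ once more gives $\sum_{i=1}^{n+1}\frac{1}{R_i+1}=1$. Set $M=D-\boldsymbol 1\boldsymbol 1^{\top}$ with $D=\mathrm{diag}(R_1+1,\dots,R_{n+1}+1)$. By the computation above $\det M=0$, and $M$ is positive semi-definite of rank exactly $n$: indeed $D^{-1/2}MD^{-1/2}=I-\boldsymbol u\boldsymbol u^{\top}$ with $\boldsymbol u=\big(1/\sqrt{R_1+1},\dots,1/\sqrt{R_{n+1}+1}\big)^{\top}$ of squared length $\sum_{i=1}^{n+1}\frac{1}{R_i+1}=1$, i.e.\ an orthogonal projection onto a hyperplane. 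A positive semi-definite matrix of rank at most $n$ is the Gram matrix of $n+1$ vectors $\boldsymbol x_1,\dots,\boldsymbol x_{n+1}$ in $\RR^n$ (write $M=B^{\top}B$ from a spectral factorization, $B$ of size $n\times(n+1)$, and let $\boldsymbol x_i$ be the columns of $B$); these satisfy $\|\boldsymbol x_i\|^2=M_{ii}=R_i$ and $\langle\boldsymbol x_i,\boldsymbol x_j\rangle=M_{ij}=-1$ for $i\ne j$, and they are pairwise distinct since $R_i>0\ne-1$. Hence $X=\{\boldsymbol x_i:1\le i\le n+1\}$ is the desired $1$-inner product set.

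The two points that need genuine care are the determinant identity $\det G_k=f_k$ — that is, checking that the leading principal minors of $D-\boldsymbol 1\boldsymbol 1^{\top}$ obey exactly the recurrence~(\ref{Eq4.1}) — and, in the forward direction, ruling out $f_k=0$ for $k\le n$; the latter is the only place where the exact inner-product value $-1$ enters, through the fact that any linear relation among the $\boldsymbol x_i$ has all its coefficients proportional to $1/(R_i+1)$ and hence nonzero. Everything else reduces to the standard correspondence between positive semi-definite matrices of bounded rank and Gram matrices, plus routine manipulation of~(\ref{Eq4.1}).
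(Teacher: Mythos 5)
Your proof is correct, and it takes a genuinely different route from the paper's. The paper proceeds by an explicit inductive coordinatization: after an orthogonal change of coordinates it puts the vectors in triangular form, and two induction claims produce closed formulas $x_{k,k}=\sqrt{f_k/f_{k-1}}$ and $b_k=-\prod_{i=1}^{k-1}(R_i+1)/\sqrt{f_{k-1}f_k}$, with the converse proved by writing down those coordinates explicitly; in effect it is a hand-rolled Cholesky factorization of the Gram matrix. You instead work abstractly with $G=D-\boldsymbol 1\boldsymbol 1^{\top}$, identify $f_k$ with the $k$-th leading principal minor via the matrix determinant lemma (so $f_k=\prod_{i=1}^k(R_i+1)\bigl(1-\sum_{i=1}^k\tfrac{1}{R_i+1}\bigr)$, a closed form the paper never states), and invoke the standard correspondence between positive semi-definite matrices of rank at most $n$ and Gram matrices of vectors in $\RR^n$. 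What your approach buys: condition (1) becomes Sylvester's criterion for $\boldsymbol x_1,\dots,\boldsymbol x_n$ to be linearly independent, the identity $\sum_{i=1}^{n+1}\tfrac{1}{R_i+1}=1$ appears immediately from $\det G=0$ (the paper only recovers it later, by a telescoping computation, when verifying $a_0=1$ in the next theorem), and the argument ruling out $f_k=0$ cleanly isolates where the specific value $-1$ is used. What it gives up is the explicit parametrization of the point set, which the paper reuses afterwards; your existence proof via spectral factorization is non-constructive by comparison. Both arguments are complete and correct.
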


\begin{proof}
Let $X=\{\boldsymbol x_k=(x_{k,1},x_{k,2},\ldots, x_{k,n}),~1 \leq k \leq n+1\} \seq \RR^n$ be a $1$-inner product set satisfying the condition (\ref{Eq4.2}).  Then up to the action of $O(n)$ we may assume that $x_{k,l}=0$, for $1 \leq k < l \leq n$ and $x_{k,k} \geq 0,$ for $1 \leq k \leq n$.  Let us first prove Claim $1$ below.\\

{\bf Claim 1:}  For $1 \leq k \leq n$, we have $x_{k,k} > 0,$ and $x_{j,k-1}=x_{j+1,k-1}= \ldots =x_{n+1,k-1}=b_{k-1},$ with some real number $b_{k-1} < 0,$ for $k \leq j \leq n+1$.

\emph{Proof of Claim 1.}  We use a mathematical induction on $k$.  Since $R_1=\|\boldsymbol x_1\|^2$ and $\boldsymbol x_1=(x_{1,1},0,\ldots,0),$ we have $x_{1,1}=\sqrt{R_1} > 0$.  Moreover, by (\ref{Eq4.2}) we have $\displaystyle x_{j,1}=-\frac{1}{\sqrt{R_1}}$, for $2 \leq j \leq n+1$.
Hence $\displaystyle b_1=-\frac{1}{\sqrt{R_1}} < 0.$
Now we assume that
$x_{i,i}>0$ and $x_{i+1,i}=x_{i+2,i}=\cdots=x_{n+1,i}=b_i$ holds for any $i\leq k-1$ with a real number $b_i<0$.
Then $\langle \boldsymbol x_l,\boldsymbol x_{k}\rangle =-1$ implies $\sum_{i=1}^{k-1}b_i^2+x_{k,k}x_{l,k}=-1$
 for any $l\geq k+1$. Therefore $x_{k,k}$ must be positive
and $x_{l,k}=-\frac{1+\sum_{i=1}^{k-1}b_i^2}{x_{k,k}}<0$
holds for any $l=k+1,\ldots, n+1$. Then
$b_k=-\frac{1+\sum_{i=1}^{k-1}b_i^2}{x_{k,k}}$ and the Claim 1 is true for $k$. This completes the  proof of
Claim 1.\\

Next we will express $b_1,\ldots,b_n$ and $x_{1,1},
\ldots,x_{n,n}$ interms of $R_1,\ldots,R_n$.
Since $\sum_{i=1}^{k-1}{b_i}^2+{x_{k,k}}^2=R_k$,
we have ${b_1}^2=\frac{1}{R_1}=\frac{1}{f_1}$ and  ${x_{2,2}}^2
=\frac{R_1R_2-1}{R_1}$. Hence we have $R_1R_2-1>0$. Let $f_1=R_1$ and $f_2=R_1R_2-1$.
Then we have $f_2=(R_2+1)f_1-(R_1+1)$.
This is (\ref{Eq4.1}) with $k=2$ and we also have
$x_{2,2}=\sqrt{\frac{f_2}{f_1}}$ and $1+{b_1}^2=\frac{R_1+1}{R_1}=\frac{R_1+1}{f_1}$. Then we have $b_2=-\frac{1+{b_1}^2}{x_{2,2}}
=-\frac{R_1+1}{\sqrt{f_1f_2}}$. Then
${b_1}^2+{b_2}^2+{x_{3,3}}^2=R_3$ implies
$${x_{3,3}}^2=R_3-{b_1}^2-{b_2}^2=\frac{R_1R_2R_3-R_1-R_2-R_3-2}{R_1R_2-1}.
$$
Hence $R_1R_2R_3-R_1-R_2-R_3-2>0$ holds.
We observe that
$$R_1R_2R_3-R_1-R_2-R_3-2=(R_3+1)(R_1R_2-1)
-(R_1+1)(R_2+1)
$$
holds.
Let $f_3=(R_3+1)(R_1R_2-1)
-(R_1+1)(R_2+1)$.
Then $f_3=(R_3+1)f_2-\prod_{i=1}^2(R_i+1)$.
This implies (\ref{Eq4.1}) with $k=3$, then we have
$x_{3,3}=\sqrt{\frac{f_3}{f_2}}$ and
$1+{b_1}^2+{b_2}^2=\frac{(R_1+1)(R_2+1)}{R_1R_2-1}=\frac{\prod_{i=1}^2(R_i+1)}{f_2}$.
Now we will prove the following claim.\\

{\bf Claim 2:}  Let $f_0, f_1,f_2,\ldots, f_n$ be the real numbers defined by (\ref{Eq4.1}) above. Then $f_k >0$, for $0 \leq k \leq n$ and the following hold for
$k=1,\ldots,n$:
\begin{itemize}
\item[(1)] $\displaystyle 1+\sum_{i=1}^k{b_i}^2=\frac{\prod_{i=1}^{k}(R_i+1)}{f_k},$
\item[(2)] $x_{k,k}=\displaystyle \sqrt{\frac{f_k}{f_{k-1}}}, \mbox{ for } 1 \leq k \leq n$,
\item[(3)] $\displaystyle b_k=-\frac{\prod_{i=1}^{k-1}(R_i+1)}{\sqrt{f_{k-1}f_k}}$.
\end{itemize}
\emph{Proof of Claim 2.}  We have already proved that Claim 2 holds for $k\leq 2$. We use induction on $k$. Assume $f_i>0$, $1+\sum_{l=1}^i{b_l}^2=\frac{\prod_{l=1}^i(R_l+1)}{f_i}$ and
$x_{i,i}=\sqrt{\frac{f_i}{f_{i-1}}}$ hold for any
$i=1,\ldots,k$.
Then
$$x_{k+1,k+1}^2=R_{k+1}
-\sum_{i=1}^k{b_i}^2=R_{k+1}+1-\frac{\prod_{i=1}^k(R_i+1)}{f_k}$$
holds. Therefore
$$R_{k+1}+1-\frac{\prod_{i=1}^k(R_i+1)}{f_k}>0$$
holds and this implies $f_{k+1}>0$ and $x_{k+1,k+1}=
\sqrt{\frac{f_{k+1}}{f_{k}}}$. Then
\begin{align*}
&1+\sum_{i=1}^{k+1}{b_i}^2
=\frac{\prod_{i=1}^k(R_i+1)}{f_k}+{b_{k+1}}^2
=\frac{\prod_{i=1}^k(R_i+1)}{f_k}+\left(\frac{1+\sum_{i=1}^k{b_i}^2}{x_{k+1,k+1}}\right)^2\\
&=\frac{\prod_{i=1}^k(R_i+1)}{f_k}+
\left(\frac{\prod_{i=1}^k(R_i+1)}{f_k}\right)^2\frac{f_k}{f_{k+1}}=\frac{\prod_{i=1}^k(R_i+1)}{f_k}
\left(1+\frac{\prod_{i=1}^k(R_i+1)}{f_{k+1}}\right)\\
&=\frac{\prod_{i=1}^{k+1}(R_i+1)}{f_{k+1}}.
\end{align*}
Finally, $b_{k+1}=-\frac{1+\sum_{i=1}^k{b_i}^2}{x_{k+1,k+1}}=- \frac{\prod_{i=1}^k(R_i+1)}{\sqrt{f_kf_{k+1}}}$. This completes the proof of Claim 2.\\

Since $1+R_{n+1}=1+\sum_{i=1}^n{b_i}^2
=\frac{\prod_{i=1}^n(R_i+1)}{f_n}$,
we have $R_{n+1}=\frac{\prod_{i=1}^n(R_i+1)}{f_n}-1>0$. Hence we obtain (2) and (3) of Theorem
\ref{Theo4.1}

Conversely, assume that the conditions (1), (2), and (3) of Theorem \ref{Theo4.1} hold, and let $R_1, R_2,\ldots,R_n$ be n positive real numbers satisfying these conditions.  Define positive real numbers $x_{1,1},\ldots, x_{n,n}$ using the equation (1) of Claim 2
and negative real numbers  $b_1,\ldots,b_n$ using the equation (2) of Claim 2. Here we define $b_1=-\frac{1}{\sqrt{R_1}}$. Then
define ($n+1$)-subset $X=\{
\boldsymbol x_k,~1 \leq k \leq n+1 \} \seq \RR^n$ by
\begin{align*}
\boldsymbol x_1 & =  (x_{1,1},~0,~0,~\ldots,~0),\\
\boldsymbol x_k & =  (b_1,~b_2,~b_3,~\ldots,~b_{k-1},~x_{k,k},~0,~\ldots,~0), \mbox{ for }2 \leq k \leq n; \mbox{ and }\\
\boldsymbol x_{n+1} & =  (b_1,~b_2,~b_3,~\ldots,~b_n).
\end{align*}
Then by direct inspection, it is easy to see that $R_k=\|\boldsymbol x_k\|^2,$ for $1 \leq k \leq n+1;$ and $\langle \boldsymbol x_k,\boldsymbol x_l \rangle=-1,$ for $k \neq l,$ i.e., $X$ satisfies the condition (\ref{Eq4.2}).
\end{proof}

\begin{corollary}
For any positive integer $p\leq n+1$, there always exists an $(n+1)$-point set $X \seq \RR^n$
satisfying the following conditions:
\begin{itemize}
\item[(1)] $\langle\boldsymbol x,
\boldsymbol y\rangle=-1$ for any distinct points $\boldsymbol x,\boldsymbol y\in X$.
\item[(2)] $|\{\|\boldsymbol x\|\ |\ \boldsymbol x\in X\}|=p$.
\end{itemize}
\end{corollary}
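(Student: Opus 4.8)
The plan is to deduce the corollary directly from Theorem~\ref{Theo4.1} by exhibiting, for each $p$, an admissible list of squared radii $R_1,\dots,R_{n+1}$ realizing exactly $p$ distinct values. The first step is to put the recursion (\ref{Eq4.1}) in closed form: by an easy induction on $k$ one checks that
\[
f_k=\Bigl(\prod_{i=1}^{k}(R_i+1)\Bigr)\Bigl(1-\sum_{i=1}^{k}\frac{1}{R_i+1}\Bigr),\qquad 0\le k\le n,
\]
since both sides equal $1$ at $k=0$, equal $R_1$ at $k=1$, and the right-hand side satisfies the same recursion $f_k=f_{k-1}(R_k+1)-\prod_{i=1}^{k-1}(R_i+1)$.

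Using this identity I would show that, for positive reals $R_1,\dots,R_{n+1}$, the three conditions (1)--(3) of Theorem~\ref{Theo4.1} are equivalent to the single symmetric relation
\[
\sum_{i=1}^{n+1}\frac{1}{R_i+1}=1 .
\]
Indeed, if this relation holds then for every $k\le n$ we have $1-\sum_{i=1}^{k}\frac{1}{R_i+1}=\sum_{i=k+1}^{n+1}\frac{1}{R_i+1}>0$, so $f_k>0$ and also $f_n<\prod_{i=1}^{n}(R_i+1)$, while $\dfrac{\prod_{i=1}^{n}(R_i+1)}{f_n}=\dfrac{1}{1-\sum_{i=1}^{n}\frac{1}{R_i+1}}=R_{n+1}+1$, which is condition (3). (This relation is nothing but the vanishing of the determinant of the Gram matrix with diagonal $R_1,\dots,R_{n+1}$ and all off-diagonal entries $-1$, i.e. the condition that $n+1$ such vectors fit in $\RR^n$; positive semidefiniteness is automatic by Cauchy--Schwarz.) Consequently it suffices to produce positive reals $R_1,\dots,R_{n+1}$ with $\sum_{i=1}^{n+1}\frac{1}{R_i+1}=1$ and exactly $p$ distinct values among them, list them in any order, and invoke the converse part of Theorem~\ref{Theo4.1} to obtain $X=\{\boldsymbol x_k\}$ with $\|\boldsymbol x_k\|^2=R_k$ and $\langle\boldsymbol x_k,\boldsymbol x_l\rangle=-1$ for $k\neq l$; since $t\mapsto\sqrt{t}$ is injective on $[0,\infty)$, the norms $\|\boldsymbol x_k\|$ then also take exactly $p$ distinct values.

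For the construction, set $c_i=\frac{1}{R_i+1}$, so the problem becomes: find $c_1,\dots,c_{n+1}\in(0,1)$ with $\sum_i c_i=1$ and exactly $p$ distinct values. For $p=1$ take $c_i=\frac{1}{n+1}$ for all $i$, i.e. $R_i=n$ (the regular simplex). For $2\le p\le n+1$, fix a small $\delta>0$ and set $c_i=i\delta$ for $1\le i\le p-1$ and $c_i=\bigl(1-\binom{p}{2}\delta\bigr)/(n+2-p)$ for $p\le i\le n+1$; then $\sum_i c_i=1$ by construction, all $c_i$ lie in $(0,1)$ once $\delta$ is small, the values $\delta,2\delta,\dots,(p-1)\delta$ are pairwise distinct, and the common value $\bigl(1-\binom{p}{2}\delta\bigr)/(n+2-p)$ stays bounded away from $0$ as $\delta\to0$, hence differs from all of them for $\delta$ small --- giving exactly $p$ distinct values. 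Putting $R_i=\frac{1}{c_i}-1>0$ and applying the previous paragraph finishes the proof.

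There is no serious obstacle here; the only points requiring (routine) verification are the closed form for $f_k$ and the bookkeeping ensuring \emph{exactly} $p$ distinct values rather than fewer --- the latter being the reason for letting the auxiliary values $i\delta$ shrink to $0$ while the bulk value stays of order $1/(n+2-p)$.
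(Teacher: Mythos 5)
Your proof is correct, but it takes a genuinely different route from the paper's. The paper starts from the regular simplex ($R_1=\cdots=R_{n+1}=n$, for which $f_i=(n-i)(n+1)^{i-1}$), observes that conditions (1) and (2) of Theorem~\ref{Theo4.1} are open (strict inequalities), and perturbs $(R_1,\dots,R_n)$ in a small neighborhood of $(n,\dots,n)$, with $R_{n+1}$ determined by (3), to realize any number $p$ of distinct radii. You instead solve the recursion (\ref{Eq4.1}) in closed form, $f_k=\bigl(\prod_{i=1}^{k}(R_i+1)\bigr)\bigl(1-\sum_{i=1}^{k}\frac{1}{R_i+1}\bigr)$ (which checks out), and thereby replace the asymmetric conditions (1)--(3) by the single symmetric relation $\sum_{i=1}^{n+1}\frac{1}{R_i+1}=1$; the explicit family $c_i=i\delta$ plus a bulk value then does the bookkeeping for exactly $p$ distinct values. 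What your approach buys is a complete and order-independent characterization of the admissible squared radii (consistent with the identity $a_0=\sum_{\boldsymbol x\in X}\frac{1}{1+\|\boldsymbol x\|^2}=1$ computed later in Section~5), and an explicit rather than perturbative construction; the paper's argument is shorter but only produces examples near the regular simplex. Your parenthetical about positive semidefiniteness of the Gram matrix is not needed (and is a little glib as stated), since you correctly rely on the converse part of Theorem~\ref{Theo4.1} rather than on Gram-matrix realizability.
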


\begin{proof}
A regular simplex $X$ on the unit sphere $S^{n-1}$ is a 1-inner product set
with the inner product
$-\frac{1}{n}$. Hence $\{\sqrt{n} \boldsymbol x\ | \ \boldsymbol x\in X\}$ is a
1-inner product set with the inner product $-1$.
Let $R_1=\ldots=R_{n+1}=n$. Then the real numbers
defined by $f_0=1$, $f_i=(n-i)(n+1)^{i-1}$, for $i=1,\ldots,n$ satisfy the conditions (1), (2) and (3)
of Theorem \ref{Theo4.1}. Then any
$(R'_1,\ldots, R'_n)\in \RR^n$
in a small neighborhood of
$(n,\ldots,n)$
satisfies the conditions (1), (2) and (3)
of Theorem \ref{Theo4.1}. Hence we can make $|\{R'_1,\ldots, R'_n, R'_{n+1}\}|=p$ for any $1\leq p\leq n+1$
\end{proof}
In view of Proposition \ref{Prop2.6}, we have the theorem below.
\begin{theorem} Assume $|X|=n+1$. Then
$(X,w) \seq \RR^n$ is a Euclidean $2$-design if and only if $X$ is a weighted $1$-inner product set in $\RR^n$ of negative inner-product value.
\end{theorem}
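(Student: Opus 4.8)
The plan is to deduce the theorem from Proposition~\ref{Prop2.6} applied with $e=1$, after the observation that a weighted set of cardinality $n+1$ is a Euclidean $2$-design exactly when it is tight. First I would record, using Proposition~\ref{Prop2.5}(3), that $\dim(\mathrm{Pol}_1(S))=n+1$ for every union $S$ of concentric spheres other than the single point $\{0\}$ (the constants contribute $1$ and $\mathrm{Harm}_1(S)$ contributes $\dim\mathrm{Harm}_1(\RR^n)=n$). Since $|X|=n+1\geq 3$, at most one point of $X$ is the origin, so this applies; together with $\dim(\mathrm{Pol}_1(\RR^n))=n+1$ it shows that any Euclidean $2$-design $(X,w)$ with $|X|=n+1$ is automatically a tight $2$-design on $S$, so that Proposition~\ref{Prop2.6}, including its converse (valid for $e=1$), applies: $(X,w)$ is a Euclidean $2$-design if and only if \eqref{Eq2.3} and \eqref{Eq2.4} hold.

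Next I would unwind \eqref{Eq2.3} and \eqref{Eq2.4} in the case $e=1$, where the only surviving basis functions are $g_{0,0}\equiv 1/\sqrt{a_0}$ and $g_{1,0}\varphi_{1,i}$ with $g_{1,0}\equiv 1/\sqrt{a_1}$, and $Q_1(\alpha)=n\alpha$. Then \eqref{Eq2.4} reads $\frac{n\langle\boldsymbol u,\boldsymbol v\rangle}{a_1}+\frac{1}{a_0}=0$ for distinct $\boldsymbol u,\boldsymbol v\in X$, i.e. $\langle\boldsymbol u,\boldsymbol v\rangle=-\frac{a_1}{na_0}$; since the weight is positive and not all points lie at the origin, $a_0,a_1>0$, so this common value is negative and $X$ is a $1$-inner product set of negative inner-product value. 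Likewise \eqref{Eq2.3} reads $\frac{n\|\boldsymbol u\|^2}{a_1}+\frac{1}{a_0}=\frac{1}{w(\boldsymbol u)}$, which forces $w(\boldsymbol u)=\frac{a_1/n}{\|\boldsymbol u\|^2-\gamma}$ where $\gamma=-a_1/(na_0)$. This proves the forward implication and pins down the meaning of ``weighted $1$-inner product set of negative inner-product value'': $X$ is a $1$-inner product set with common value $\gamma<0$, and $w$ is proportional to $\boldsymbol u\mapsto 1/(\|\boldsymbol u\|^2-\gamma)$.

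For the converse I would start from such a pair $(X,w)$, write $R_i=\|\boldsymbol x_i\|^2$, normalize (by Proposition~\ref{Prop2.2}(2)) so that $w(\boldsymbol x_i)=1/(R_i-\gamma)$, a genuine positive weight because $R_i\geq 0>\gamma$, and then verify \eqref{Eq2.3} and \eqref{Eq2.4}. The one nontrivial ingredient is the identity $\sum_{i=1}^{n+1}\frac{1}{R_i-\gamma}=-\frac{1}{\gamma}$. I would obtain it from the fact that the $(n+1)\times(n+1)$ Gram matrix $G=\mathrm{diag}(R_i-\gamma)+\gamma\,\mathbf{1}\mathbf{1}^{t}$ of $X\subseteq\RR^n$ has rank at most $n$, hence $\det G=0$, while the matrix determinant lemma gives $\det G=\bigl(\prod_i(R_i-\gamma)\bigr)\bigl(1+\gamma\sum_i\frac{1}{R_i-\gamma}\bigr)$ with $\prod_i(R_i-\gamma)>0$. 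Granting this, $a_0=\sum_i w(\boldsymbol x_i)=-1/\gamma$ and $a_1=\sum_i w(\boldsymbol x_i)R_i=\sum_i\bigl(1+\tfrac{\gamma}{R_i-\gamma}\bigr)=(n+1)+\gamma a_0=n$, so $-a_1/(na_0)=\gamma$ and $\frac{a_1/n}{\|\boldsymbol u\|^2-\gamma}=w(\boldsymbol u)$; hence \eqref{Eq2.4} and \eqref{Eq2.3} hold, and the converse part of Proposition~\ref{Prop2.6} gives that $(X,w)$ is a tight $2$-design on $S$, a fortiori a Euclidean $2$-design.

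I expect the main obstacle to be precisely the Gram-matrix identity $\sum_i 1/(R_i-\gamma)=-1/\gamma$; once it is in hand, the rest is substitution into Proposition~\ref{Prop2.6}. An alternative route would be to reduce to $\gamma=-1$ by scaling (Proposition~\ref{Prop2.2}(1)) and extract the identity from the recursion \eqref{Eq4.1} of Theorem~\ref{Theo4.1} by a telescoping argument, but the determinant computation is shorter and independent of Section~4. A final routine point to keep in mind is that $\dim(\mathrm{Pol}_1(S))=n+1$ irrespective of the number $p$ of supporting spheres, which is what makes tightness automatic and lets Proposition~\ref{Prop2.6} apply throughout.
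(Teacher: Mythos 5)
Your argument is correct, and the forward direction coincides with the paper's (tightness is automatic for $|X|=n+1$, then Proposition~\ref{Prop2.6} with $e=1$ and $Q_1(\alpha)=n\alpha$ forces $\langle \boldsymbol u,\boldsymbol v\rangle=-a_1/(na_0)<0$). Where you genuinely diverge is in the converse, at the single nontrivial identity $\sum_{i}1/(R_i-\gamma)=-1/\gamma$ (equivalently, after the paper's normalization $\gamma=-1$, the statement $a_0=\sum_i 1/(R_i+1)=1$). The paper obtains this by first proving Theorem~\ref{Theo4.1} — an explicit triangular coordinatization of the point set yielding the recursion \eqref{Eq4.1} for the quantities $f_k$ — and then telescoping $\frac{f_n}{\prod_{i=1}^n(R_i+1)}+\sum_{i=1}^n\frac{1}{R_i+1}$ down to $1$ using $f_k=(R_k+1)f_{k-1}-\prod_{i=1}^{k-1}(R_i+1)$ and condition (3) of that theorem. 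You instead observe that the Gram matrix $G=\mathrm{diag}(R_i-\gamma)+\gamma\,\mathbf{1}\mathbf{1}^{t}$ of $n+1$ points in $\RR^n$ is singular and read off the identity from $\det G=\bigl(\prod_i(R_i-\gamma)\bigr)\bigl(1+\gamma\sum_i\tfrac{1}{R_i-\gamma}\bigr)=0$, which is legitimate since $R_i-\gamma>0$ makes the diagonal part invertible and the product positive. Your route is shorter, avoids any scaling normalization, and is logically independent of Theorem~\ref{Theo4.1}; the paper's route is longer but is not wasted work in context, since Theorem~\ref{Theo4.1} is needed anyway for the classification (it gives existence and the exact parametrization of $1$-inner product sets by $(R_1,\ldots,R_n)$, which your determinant argument does not provide). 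The remaining substitutions into \eqref{Eq2.3} and \eqref{Eq2.4} ($a_0=-1/\gamma$, $a_1=n$, hence both identities hold and the $e=1$ converse of Proposition~\ref{Prop2.6} applies) agree with the paper.
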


\begin{proof}
Let $X \seq \RR^n$ be a Euclidean $2$-design with $|X|=n+1.$  Then $X$ is a tight Euclidean $2$-design and Proposition \ref{Prop2.6} implies
\[
\langle \boldsymbol u,\boldsymbol v \rangle = -\frac{a_1}{na_0} <0,~\mbox{for any distinct }\boldsymbol u,\boldsymbol v \in X,
\]
namely $X$ is a $1$-inner product set of negative inner-product value $\displaystyle -\frac{a_1}{n a_0}$ consisting of $n+1$ points.

Conversely,  let $X=\{\boldsymbol x_k,~1 \leq k \leq n+1\}$, with $R_k=\|\boldsymbol x_k\|^2,$ for $1 \leq k \leq n+1$ be a $1$-inner product set in $\RR^n$ of negative inner-product value.  Proposition \ref{Prop2.2}
implies that by scaling we may assume
\[
\langle \boldsymbol u,\boldsymbol v \rangle = -1,~\mbox{for any distinct }
\boldsymbol u,\boldsymbol v \in X.
\]
Let $\displaystyle w(\boldsymbol x)=\frac{1}{\|\boldsymbol x\|^2+1}$ be a weight function of $X$.  It is enough for us to show that the equations (\ref{Eq2.3}) and (\ref{Eq2.4}) of Proposition \ref{Prop2.6} hold for $e=1.$
By definition (as given in (\ref{Eq2.2})), we have
\[
a_0=\sum_{\boldsymbol x \in X}
w(\boldsymbol x)=\sum_{\boldsymbol x \in X}\frac{1}{\|\boldsymbol x\|^2+1}=\frac{1}{R_{n+1}+1} + \sum_{i=1}^n \frac{1}{R_i+1}.
\]

Moreover, by the recurrence relation (\ref{Eq4.1}), together with the conditions (1) and (2) of Theerem \ref{Theo4.1}, the last expression above is equal to the following

\begin{align*}
\frac{1}{R_{n+1}+1} + \sum_{i=1}^n \frac{1}{R_i+1} & =  \frac{f_n}{\prod_{i=1}^n(R_i+1)} + \sum_{i=1}^n \frac{1}{R_i+1}\\
& =  \frac{(R_n+1)f_{n-1}-\prod_{i=1}^{n-1}(R_i+1)}{\prod_{i=1}^n(R_i+1)} + \sum_{i=1}^n \frac{1}{R_i+1}\\
& =  \frac{f_{n-1}}{\prod_{i=1}^{n-1}(R_i+1)} + \sum_{i=1}^{n-1} \frac{1}{R_i+1}\\
& \vdots & \\
& =  \frac{f_2}{(R_2+1)(R_1+1)}+\frac{1}{R_2+1}+\frac{1}{R_1+1}\\
& =  1.
\end{align*}

Hence we have $a_0=1.$  Also by definition as given in (\ref{Eq2.2}), we have
\[
\sum_{\boldsymbol x \in X}
w(\boldsymbol x)
\|\boldsymbol x\|^2=a_1.
\]
Since the weight function $\displaystyle w(\boldsymbol x)=\frac{1}{1+
\|\boldsymbol x\|^2},~\mbox{for } \boldsymbol x \in X,$ we have
$w(\boldsymbol x) + w(\boldsymbol x) \|\boldsymbol x\|^2 =1, ~\mbox{for }\boldsymbol x \in X$.
This implies
\[
\sum_{\boldsymbol x \in X}w(\boldsymbol x) \|\boldsymbol x\|^2 +\sum_{\boldsymbol x \in X}w(\boldsymbol x) = n+1.
\]
Since $a_0=1,$ we obtain $a_1=n.$  Bearing in mind that $Q_1(\alpha)=n \alpha$, we have
\[
\frac{\|\boldsymbol x\|^2}{a_1}Q_1(1) + \frac{1}{a_0}=\|\boldsymbol x\|^2 + 1 = \frac{1}{w(\boldsymbol x)},
\]

and

\[
\frac{\|\boldsymbol x\| \|\boldsymbol y\|}{a_1}Q_1\left(\frac{\langle
\boldsymbol x,\boldsymbol y \rangle}
{\|\boldsymbol x\| \|\boldsymbol y\|} \right) + \frac{1}{a_0} = \frac{\|\boldsymbol x\| \|\boldsymbol y\|}{n}Q_1\left(\frac{-1}{\|\boldsymbol x\|
\|\boldsymbol y\|} \right) + 1 = 0.
\]

Hence Proposition \ref{Prop2.6} implies that $X$ is a tight Euclidean $2$-design with weight function $\displaystyle w(\boldsymbol x)=\frac{1}{1+\|\boldsymbol x\|^2},~\mbox{for }\boldsymbol x \in X$.
\end{proof}

\section{Concluding Remarks}
\begin{itemize}
\item[(1)] Neumaier and Seidel and also Delsarte and Seidel
conjectured that the only tight Euclidean $2e$-designs in
$\RR^n$ are regular simplices (See \cite[Conjecture 3.4]{N-S}
and \cite[pp. 225]{D-S}).  Recently, Bannai and Bannai
\cite{B-B-2} has disproved this conjecture providing the
example of Euclidean tight $4$-designs in $\RR^2$ supported
by two concentric spheres, i.e., which are not regular simplices.
However, constructing a tight Euclidean design is not so easy
in general.  In this paper we introduce a new notion of a
strong non-rigidity of Euclidean $t$-designs. Then, disprove
the conjecture by investigating the strong non-rigidity of
the designs.

\item[(2)] Regarding the existence of tight Euclidean designs,
we believe in the following conjecture:
\begin{conjecture}If a tight Euclidean $2e$-design or an
antipodal tight Euclidean $(2e+1)$-design supported by more
than $\displaystyle \left[\frac{e+\varepsilon_S}{2}\right]+1$
concentric spheres exists, then there exist infinitely many
tight Euclidean $2e$-designs or antipodal tight Euclidean
$(2e+1)$-designs, respectively.
\end{conjecture}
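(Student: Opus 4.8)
The plan is to run the implicit-function-theorem machinery of Section~4 at the hypothetical many-sphere design. Let $(X,w)$ be a tight Euclidean $2e$-design (or an antipodal tight $(2e+1)$-design) in $\RR^n$ supported by $p$ concentric spheres with $p>[\frac{e+\varepsilon_S}{2}]+1$. By Proposition~\ref{Prop2.5}(3) (resp.~(5)) this bound on $p$ forces $\dim(\mathrm{Pol}_e(S))=\dim(\mathrm{Pol}_e(\RR^n))$ (resp.~$\dim(\mathrm{Pol}_e^*(S))=\dim(\mathrm{Pol}_e^*(\RR^n))$), so that $N:=|X|=\binom{n+e}{e}$ (resp.~$|X^*|=\sum_{i=0}^{[e/2]}\binom{n+e-2i-1}{n-1}$), a number that does not depend on $p$. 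Let $f_1,\dots,f_K$ be the Neumaier--Seidel polynomials of Theorem~\ref{TheoN-S}(2), viewed as functions of the coordinate-and-weight variables $\xi\in\RR^m$ with $m=(n+1)N$ (in the antipodal case one keeps only the even-degree conditions and works with the antipodal half $X^*$, exactly as in Section~4), and let $\eta\in\RR^m$ be the point representing $(X,w)$. The goal is to establish: \textbf{(a)} the Jacobian $J=(\partial f_i/\partial\xi_k)$ has full rank $K$ at $\eta$; and \textbf{(b)} $m-K>\dim O^*(n)=\binom{n}{2}+2$.

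Given (a) and (b), the conjecture follows by the argument of Section~4. By (a) and the implicit function theorem, $V=\{f_1=\dots=f_K=0\}$ is, near $\eta$, a smooth manifold of dimension $m-K$, and the free parameters may be chosen to include all $N$ weight variables. Since $X$ lies on $p$ \emph{separated} spheres, every solution $X'$ close enough to $\eta$ still has its norms falling into $p$ disjoint clusters, so $X'$ is supported by at least $p>[\frac{e+\varepsilon_{S'}}{2}]+1$ spheres; Proposition~\ref{Prop2.5} then gives $\dim(\mathrm{Pol}_e(S'))=|X'|$, so $X'$ is again a tight Euclidean design of the same strength (positivity of weights and distinctness of points being open conditions that survive a small perturbation). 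Now (b) says $\dim_\eta V$ exceeds the dimension of every $O^*(n)$-orbit, so the local piece of $V$ through $\eta$ is not contained in a single orbit and meets infinitely many of them; alternatively, spread the $N$ weights to pairwise distinct values and invoke Proposition~\ref{Prop2.4} to see that the deformed designs occupy $N$ distinct spheres, so they are mutually non-isomorphic.

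The inequality (b) is a plain count: $K=\sum_{1\le l\le 2e}\big(\lfloor\frac{2e-l}{2}\rfloor+1\big)\dim\mathrm{Harm}_l(\RR^n)$ (with the obvious antipodal analogue). For $e=1$ one gets $m-K-\binom{n}{2}-2=n>0$ in every dimension, and (b) holds for all $(n,e)$ occurring in Theorem~\ref{Theo3.8}. But the $l=2e,\ j=0$ summand contributes $\dim\mathrm{Harm}_{2e}(\RR^n)$, a polynomial of degree $2e$ in $n$, to $K$, whereas $m$ has degree only $e+1$; so for $e\ge2$ the quantity $K$ outgrows $m$, and $m-K\le\binom{n}{2}+2$ for all but a handful of small $(n,e)$ --- it already fails, with equality and no strict inequality, at $(n,e)=(4,2)$. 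In that regime one must replace (a)+(b) by a direct lower bound $\dim_\eta V>\binom{n}{2}+2$, for instance by exhibiting a family of tight designs through $\eta$ in which the radii $r_1,\dots,r_p$ (or sufficiently many of them) can be varied freely while the angular configuration is carried along; this is where having strictly more spheres than the threshold $[\frac{e+\varepsilon_S}{2}]+1$ is meant to create the required slack, generalizing the explicit one-parameter families $X(r)$ of Examples~\ref{Ex3.4} and~\ref{Ex3.5}.

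The hard part, and the reason the statement is only conjectured, is precisely this: no structural principle is known that forces the Neumaier--Seidel Jacobian of a tight Euclidean design on many spheres either to have full rank with $m-K$ large, or to drop rank by the amount a direct construction would require; in the cases settled in Theorem~\ref{Theo3.8} the rank is checked by an ad hoc computation (see the matrix $J'$ displayed for $n=4$) whose combinatorics becomes unmanageable for general $n$, $e$ and general radii. Worse, the hypothesis of the conjecture --- the existence of even a single tight Euclidean $2e$-design (or antipodal tight $(2e+1)$-design) on more than $[\frac{e+\varepsilon_S}{2}]+1$ concentric spheres --- is itself open for almost all parameters, so even producing a point $\eta$ at which to examine $J$ is a prerequisite. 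I therefore expect that a proof at this level of generality, if one exists, will arrive bundled with a new construction of such designs, after which the scheme above (nearby solutions remain tight, weights can be separated, Proposition~\ref{Prop2.4}) should complete the argument.
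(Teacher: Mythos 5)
The statement you were asked to prove is not a theorem of the paper: it appears only as a \emph{Conjecture} in the concluding remarks, and the authors give no proof of it. There is therefore no argument in the paper to compare yours against, and the correct verdict on your proposal is the one you reach yourself in its last paragraph: it is not a proof. What you present is the natural line of attack --- run the Section 4 implicit-function-theorem machinery at a hypothetical tight design on $p>[\frac{e+\varepsilon_S}{2}]+1$ spheres, establish full rank of the Jacobian of the Neumaier--Seidel system, and compare $m-K$ with $\dim O^*(n)=\binom{n}{2}+2$ --- together with an accurate diagnosis of why it cannot be completed in general. Your arithmetic is correct: for $e=1$ the slack $m-K-\binom{n}{2}-2$ equals $n>0$, which is precisely why the paper can settle the $2$-design case (Theorem \ref{Theo3.8}(1) and Section 5); for $e\ge 2$ the count $K$ is dominated by $\dim\mathrm{Harm}_{2e}(\RR^n)$, of degree $2e$ in $n$, against $m=(n+1)\binom{n+e}{e}$ of degree $e+1$, and already at $(n,e)=(4,2)$ one has $m-K=8=\binom{4}{2}+2$ with no strict inequality. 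The cases the paper actually proves are exactly the small-$(n,e)$ instances where the count works or an explicit one-parameter family $X(r)$ exists, and there the rank of $J'$ is verified by direct computation.

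The irreducible gap, then, is your step (a): no structural reason is known for the Neumaier--Seidel Jacobian of a hypothetical many-sphere tight design to have full rank (or for the solution variety through it to have dimension exceeding $\binom{n}{2}+2$), and for $e\ge 2$ the naive dimension count goes the wrong way, so (a) cannot be replaced by counting. Compounding this, the hypothesis of the conjecture is existential: for $e\ge 2$ no tight Euclidean $2e$-design on more than $[\frac{e+\varepsilon_S}{2}]+1$ spheres is known, so there is no point $\eta$ at which the Jacobian could even be evaluated. Two smaller points in your sketch deserve explicit justification if the scheme is ever carried out: (i) the claim that nearby solutions $X'$ remain \emph{tight} uses $|X'|=\dim(\mathrm{Pol}_e(S'))$, and since a small perturbation can only increase $p$ and can only decrease $\varepsilon_S$ (a point at the origin may leave it, but no point can arrive there), the inequality $p'>[\frac{e+\varepsilon_{S'}}{2}]+1$ is indeed preserved and Proposition \ref{Prop2.5} keeps the dimension pinned at $\binom{n+e}{e}$; (ii) separating the weights and invoking Proposition \ref{Prop2.4} does yield mutually non-isomorphic designs, as in Section 4. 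With those caveats your proposal is best read as a correct explanation of why the statement is a conjecture, not as a proof of it.
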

\end{itemize}

\end{document}